\newcommand{\ud}{\mathrm{d}}
\numberwithin{equation}{section}
\newcommand{\KKer}{\mbox{Ker}(\mathcal{K})}
\newcommand{\KKerH}{\mbox{Ker}(\mathcal{K}_h)}
\newcommand{\KKerOrth}{\mbox{Ker}(\mathcal{K})^\perp}
\newcommand{\KKerHOrth}{\mbox{Ker}(\mathcal{K}_h)^\perp}
\newtheorem{remark}[theorem]{Remark}
\newcommand{\TheTitle}{Guaranteed eigenvalue bounds for the Steklov eigenvalue problem}
\newcommand{\TheAuthors}{C. You, H. Xie and  X. Liu}
\headers{\TheTitle}{\TheAuthors}
\title{{\TheTitle}\thanks{Submitted to the editors DATE.
\funding{The research has been supported by Science Challenge Project (No. TZ2016002), National Natural Science Foundations of China (NSFC 11771434, 91330202, 11371026),
the National Center for Mathematics and Interdisciplinary Science, CAS, for the second author;
by Japan Society for the Promotion of Science, Grand-in-Aid for Young Scientist (B) 26800090 and Grant-in-Aid for Scientific Research (C) 18K03411 for the third author.}}}
\author{
Chun'guang You\thanks{CAEP Software Center for High Performance Numerical Simulation, NO. 6, Huayuan Road, Haidian District, Beijing 100088, China (\email{youchg@lsec.cc.ac.cn}).}
\and
Hehu Xie\thanks{LSEC, ICMSEC, Academy of Mathematics and Systems Science, Chinese Academy of Sciences, NO. 55, Zhongguancun East Road, Beijing 100190, P.R. China, and School of Mathematical Sciences, University of Chinese Academy of Sciences, Beijing, 100049, P.R. China (\email{hhxie@lsec.cc.ac.cn}).}
\and
Xuefeng Liu\thanks{(Corresponding author) Graduate School of Science and Technology, Niigata University, 8050 Ikarashi 2-no-cho,
Nishi-ku, Niigata City, Niigata 950-2181 Japan (\email{xfliu@math.sc.niigata-u.ac.jp}).}
}
\begin{document}
%\title{Verified eigenvalue bounds for the Steklov eigenvalue problem}
%-----------------------------------------------------------------------------------------------------

\date{}
\maketitle
%-----------------------------------------------------------------------------------------------------
%-----------------------------------------------------------------------------------------------------
\begin{abstract}
To provide mathematically rigorous eigenvalue bounds for the Steklov eigenvalue problem,
an enhanced version of the eigenvalue estimation algorithm developed by the third author is proposed, 
which removes the requirements of the positive definiteness of bilinear forms in the formulation of eigenvalue problems.
In practical eigenvalue estimation, the Crouzeix--Raviart finite element method (FEM) along with
quantitative error estimation is adopted.
Numerical experiments for eigenvalue problems defined on a square domain and an L-shaped domain are provided to validate the precision of computed eigenvalue bounds.

%-----------------------------------------------------------------------------------------------------
\vskip0.3cm {\bf Keywords.} the Steklov eigenvalue problem, eigenvalue bounds, 
the Crouzeix--Raviart finite element method, verified computing
%~ , {\color{gray}{Lehmann--Goerisch method}}
%-----------------------------------------------------------------------------------------------------
\vskip0.2cm {\bf AMS subject classifications.} 65N30, 65N25, 65L15, 65B99.
\end{abstract}
%-----------------------------------------------------------------------------------------------------
%-----------------------------------------------------------------------------------------------------

\section{Introduction}
We aim to provide explicit eigenvalue bounds for Steklov-type eigenvalue problems,
such as
\begin{equation} \label{eq:steklov-eig-pro}
-\Delta u + u = 0 \mbox{ in }\Omega, \quad  \frac{\partial{u}}{\partial \vec{n}}=\lambda  u \mbox{ on  } \partial \Omega\:.
\end{equation}
Here,
$\Omega$ is a bounded domain with Lipschitz boundary and ${\vec{n}}$ is the unit outward normal on the boundary $\partial\Omega$.
Such problems have increasing sequences of eigenvalues (see, for example, \cite{Babuska-Osborn-1991}):
$$
0<\lambda_1 \le \lambda_2 \le \ldots\, .
$$
Eigenvalue problems with eigenvalue parameters in the boundary conditions appear in many practical applications.
For example, they can be found when modeling anti-plane shearing in a system of
collinear faults under a slip-dependent friction law \cite{Bucur-Ionescu-2006}, or
the vibration modes of a linear elastic structure
containing an inviscid fluid \cite{Bermudez-Rodriguez-Santamarina-2000}.

It is important to obtain concrete values or exact eigenvalue bounds.
For example, in the error analysis when verifying solutions to nonlinear partial differential equations,
{{explicit values of many error constants are desired \cite{PLUM1992,NAKAO1992,Takayasu2013}}}.
Such constants are often determined by solving differential eigenvalue problems; see \cite{Kikuchi+Liu2007,liu-kikuchi-2010}.
As a concrete example, the constant in the trace theorem is directly related to the Steklov eigenvalue problem.
The trace theorem states that for all $H^{1}$ functions defined on a domain $\Omega$ with Lipschitz boundary,
there exists a constant $C$ that makes the following inequality hold:
$$
\|u\|_{L^2(\partial \Omega)} \le C \|u \|_{H^1(\Omega)} \quad \forall u \in H^1(\Omega)\:.
$$
The constant $C$ here is determined by the first eigenvalue of the eigenvalue problem \cref{eq:steklov-eig-pro}, that is,
$C={1}/{\sqrt{\lambda_1}}$.

The Steklov eigenvalue problem belongs to the class of eigenvalue problems involving self-adjoint differential operators,
such as the Laplacian eigenvalue problem.
It is known that upper eigenvalue bounds can easily be obtained using the Rayleigh--Ritz method with trial functions,
e.g., using polynomial trigonometric functions and finite element methods (FEMs).
However, finding lower eigenvalue bounds remains a difficult problem and has drawn the interest of many researchers.
In the literature, various techniques have been developed for providing lower eigenvalue bounds;
see, for example, the survey in \cite{Liu-Oishi-2013} and the papers cited therein.
Note that most of the existing methods only work for special domains and the computed results cannot be guaranteed to be mathematically correct.

To give guaranteed eigenvalue bounds, the rounding error in the floating-point number computing should also be estimated.
Early work to provide mathematically rigorous eigenvalue bounds can be found in Plum \cite{plum1991bounds}, where the homotopy method is developed, and Nakao, et. al. \cite{nakao1999numerical},
which provides eigenvalue bounds by identifying ranges where eigenvalues can and cannot exist.

In research on FEMs, bounding eigenvalues from two sides is an important topic.
The approximate eigenvalue given by the mass lumping method itself is a lower bound, 
but the approximate eigenvalue can only be shown to be an exact lower bound for special domains with well-constructed meshes \cite{Hu-Huang-Shen-2004}. 
Many non-conforming FEMs also provide lower eigenvalue bounds asymptotically, i.e., when the mesh is fine enough, 
the computed eigenvalues converge to the exact values from below; see the work surveyed in \cite{Yang2010, Luo-Lin-Xie-2012} 
and an efficiency improvement using a multilevel correction scheme \cite{Han-Li-Xie-2015}. 
However, the precondition required for these asymptotic lower bounds, 
that the mesh size be small enough, cannot be verified in solving practical problems.

%Nevertheless, it is difficult to find the bound for a general domain, especially for domains with a reentrant corner, since the
%singularities appear in eigenfunctions make the error analysis argument more difficult; see, for example, \cite{Weinberger-1958}.

Birkhoff, et al. \cite{Birkhoff_etal1966} proposed a method of finding eigenvalue bounds for smooth Sturm--Liouville systems using piecewise-cubic polynomials. 
Inspired by the idea of \cite{Birkhoff_etal1966} and using the hypercircle equation technique along with the linear conforming FEM and the lowest-order 
Raviart--Thomas FEM, in \cite{Liu-Oishi-2013,Liu2011}, Liu and Oishi developed an algorithm to provide guaranteed two-sided bounds for the Laplacian eigenvalue problem, 
which can naturally handle eigenvalue problems over bounded polygonal domains of arbitrary shapes. In \cite{Liu-2015}, Liu extends such an algorithm to create an abstract framework for general self-adjoint differential operators. Carstensen et al. \cite{carstensenGallistl2014, Carstensen2014} also developed explicit eigenvalue bounds for Laplace and biharmonic operators. As presented, these eigenvalue bounds require a so called ``separation condition,'' but in reality, this is not needed, as shown in \cite{Liu-2015}.

To deal with the Steklov eigenvalue problem, the framework of \cite{Liu-2015} must be further extended since its bilinear forms \cref{M-N-def} and \cref{steklov problem} are defined on different domains, i.e., the interior of $\Omega$  and the boundary of $\Omega$. In this paper, we extend the framework to more general variationally formulated eigenvalue problems and successfully obtain lower eigenvalue bounds for the Steklov eigenvalue problem along with the Crouzeix--Raviart FEM. The result in \cref{steklov_explicit_eig_bound} shows that the lower bound for the $i$th eigenvalue is obtained as
\begin{equation}
\label{main-result-summary}
\lambda_i \ge  \frac{\lambda_{h,i}}{1+C_h^2 \lambda_{h,i}}\:.
\end{equation}
Here, $\lambda_{h,i}$ is the $i$th approximate eigenvalue computed using the Crouzeix--Raviart FEM (see \S\ref{section-steklov-preliminary} for details) and
$C_h$ is a quantity for which we have worked hard to provide an explicit value.
The Steklov eigenvalue problem is also considered by Ivana and Tom{\'{a}}{\v{s}} in \cite{Sebestova2014}, 
where the {\it a prior--a posteriori} inequalities and a complementarity technique have been applied to calculate two side eigenvalue bounds.
However, the proposed method needs a priori information about exact eigenvalues, which is usually unknown.
To the best of the authors' knowledge, our paper is the first report on rigorous eigenvalue bounds for Steklov eigenvalue problems.

The explicit lower eigenvalue bound \cref{main-result-summary} has a convergence rate
of $O(h)$ (where $h$ is the mesh size) even for convex domains, which is not optimal compared to the convergence rate of $\lambda_{h,i}$.
In \cite{LiLinXie2013}, it is proved that assuming the eigenfunctions are $H^2$-regular, $\lambda_{h,i}$ itself is
an exact lower bound when the mesh size is {\em small enough} and the convergence rate is $O(h^{2})$. \\

%Since it is difficult to construct high-degree non-conforming FEMs, the only way to improve the precision of eigenvalue bounds based on
%\cref{main-result-summary} is to refine the meshes, which is however not efficient.
%
%~ For example, given a unit square domain, by using the second-order Lagrange FEM over an $8\times 8$ mesh,
%~ we {\em prove} that the first eigenvalue is bounded as follows:
%~ $$
%~ 0.240078143 \le \lambda_1 \le 0.240079091  \:.
%~ $$

The rest of the paper is organized as follows.
In \S 2, we introduce the abstractly formulated eigenvalue problem along with the main theorem,
which provides the lower eigenvalue bounds.
In \S 3, results from the previous section are applied to the Steklov eigenvalue problem to obtain lower eigenvalue bounds,
taking care to give explicit error estimates for the projection operator.
In \S 4, computation results are presented to demonstrate the efficiency of our proposed method for bounding eigenvalues. Finally, in \S 5, we summarize the results of this paper and discuss the issues with the current algorithm.

%=============================================================================================
\section{Variationally formulated eigenvalue problem and lower eigenvalue bounds}
First, we formulate the assumptions for the eigenvalue problem in this paper, which can be regarded as an extension of Liu \cite{Liu-2015}.

\begin{itemize}
\item[(A1)]
$\widetilde{{V}}$ is a Hilbert space with inner product ${M}(\cdot, \cdot)$ and norm $\|\cdot \|_{{M}}$.

\item [(A2)]

${N}(\cdot, \cdot)$ is a symmetric positive semi-definite bilinear form of $\widetilde{V}$ and
the corresponding semi-norm is denoted by $\|\cdot \|_{N}$.

\item [(A3)] $\| \cdot \|_{N}$ is compact with respect to $\|\cdot\|_{M}$, i.e.,
every sequence of $\widetilde{V}$ bounded under $\| \cdot\|_M$ has a subsequence that is Cauchy under $\|\cdot\|_{N}$.

\item [(A4)]
%Let ${V}$ and ${V}_h$ be closed linear subspaces of $\widetilde{V}$. Let ${V}_h$ be finite dimentional.
${V}$ and $V_h$ are closed linear subspaces of $\widetilde{V}$, and ${V}_h$ is finite-dimensional.\\

%The restriction of $\widetilde{M}(\cdot,\cdot)$, $\widetilde{N}(\cdot,\cdot)$ to $V$  are denoted by
%$M(\cdot,\cdot)$, $N(\cdot,\cdot)$, respectively.

\end{itemize}

\begin{remark}
In the case where $N(\cdot,\cdot)$ is a positive definite bilinear form, the assumptions here are the same as the ones in \cite{Liu-2015}.
\end{remark}

\begin{remark}
The assumptions (A1)--(A4) are designed to ease the theoretical analysis. For practical problems, the target eigenvalue problems will be configured in the space $V$ and solved numerically with the introduction of $\widetilde{V}$ and $V_h$; see {\rm \S 3} for the case of the Steklov eigenvalue problem.\\
\end{remark}

Let $\mathcal{K}$ be the operator that maps $f\in V$ to the solution $\mathcal{K}f \in V$ of the variational equation
\begin{equation}
\label{eq:def-K}
M(\mathcal{K}f, v) = N(f, v)  \quad \forall v \in V\:.
\end{equation}
Assumptions (A1)--(A4) then assert that  $\mathcal{K}:V \mapsto V$ is a compact self-adjoint operator. \\

\paragraph{\bf Spectrum of $\mathcal{K}$}

Let $\mbox{Ker}( \mathcal{K} )$ be the kernel space of $\mathcal{K}$. Thus, from the definition of $\mathcal{K}$ in \cref{eq:def-K}, we have
$$
\mbox{Ker}( \mathcal{K} ) = \{v \in V \:|\: N(v,v) =0 \}\:.
$$
%~ This is because $\mathcal{K}f=0$ implies $\|f\|_N=0$ by noticing that $\|f\|_N^2 \le \|\mathcal{K}\|_M \|f\|_M$.
Let $\mbox{Ker}(\mathcal{K})^\perp$ denote the orthogonal complement subspace
of $\mbox{Ker}(\mathcal{K})$ in $V$ with respect to $M(\cdot, \cdot)$.
Let
$$d =\mbox{dim}(\mbox{Ker}( \mathcal{K} )^\perp)\:.$$
Note that $d $ can be $\infty$.
From the theory of compact self-adjoint operators, it is well-known that
$\mathcal{K}$ has the
spectrum $\{\mu_k\}_{k=1}^{d }$, and $0$ if $\mbox{Ker}( \mathcal{K} ) \not = \emptyset$.
Here, the sequence $\{\mu_k\}$ is monotonically decreasing, i.e., $\mu_k \ge \mu_{k+1}$;
each $\mu_k$ is positive
and the number of times it occurs is given by its geometric multiplicity.
Let $\{u_k\}_{k=1}^{d }$ be the orthonormal eigenfunctions
associated with  $\mu_k$'s.
Then $V$ has the following orthonormal decomposition:
$$
V = \mbox{Ker}(\mathcal{K}) \oplus  \mbox{Ker}(\mathcal{K})^\perp, \,\,
\quad
\mbox{Ker}(\mathcal{K})^\perp = \mbox{span} \{u_k\}_{k=1}^{d }\:.
$$

\paragraph{\bf Eigenvalue problem}
Consider the following variational eigenvalue problem\footnote{Another formulation of the eigenvalue problem is as follows:
Find $(\lambda,u)\in \mathbb{R}\times V$, s.t. $\|v\|_N=1$ and
$M(u,v) = \lambda N(u, v) \quad \forall  v\in V$. }:
Find $(\lambda,u)\in \mathbb{R}\times {\mbox{Ker}(\mathcal{K})^\perp}$, s.t.
\begin{equation}\label{abstract problem}
M(u,v) = \lambda N(u, v) \quad \forall  v\in {\mbox{Ker}(\mathcal{K})^\perp}.
\end{equation}
Let $\lambda_k := \mu_k^{-1}$.
The eigenpair of \cref{abstract problem} is given by
$\{\lambda_k, u_k\} $ $(k=1, 2, \ldots, d  )$. \\

\paragraph{\bf Discrete eigenvalue problem}
Analogously, introduce $\mathcal{K}_h:V_h \mapsto V_h$ for $f\in V_h$,
\begin{equation}
\label{K_on_V_h}
M(\mathcal{K}_h f, v_h) = N(f, v_h)  \quad \forall v_h \in V_h\:.
\end{equation}
Then, the kernel space of $\mathcal{K}_h$ is given by
$$
\mbox{Ker}( \mathcal{K}_h ) = \{v_h \in V_h \:|\: N(v_h,v_h) =0 \}\:,
$$
and denote its  ${M}$-orthogonal complement in $V_h$ by $\mbox{Ker}( \mathcal{K}_h )^\perp$.

Now let us consider the following eigenvalue problem:
Find $(\lambda_h, u_h)\in \mathbb{R}\times \KKerHOrth$, s.t.,
\begin{equation}\label{abstract fem}
{M}(u_h,v_h) = \lambda_h {N}( u_h, v_h) \quad \forall  v_h\in \KKerHOrth\: .
\end{equation}
Let $n=\mbox{dim}(\KKerHOrth)$ and $\{(\lambda_{h,k}, u_{h,k})\}_{k=1}^n$ be the eigenpairs of \cref{abstract fem} with
$$
0 < \lambda_{h,1} \leq \lambda_{h,2} \leq \ldots \leq \lambda_{h,n}
$$
and ${M}(u_{h,i}, u_{h,j}) = \delta_{ij}$ ($\delta_{ij}$ is the Kronecker delta).

\begin{remark}
In practice, construction of the kernel space can be avoided by defining the eigenvalue problem over $V_h$ as follows:
Find $(\mu_h, u_h)\in \mathbb{R}\times {V_h}$, s.t.
\begin{equation}
\label{abstract_fem_inverse}
N(u_h, v_h ) = \mu_h M(u_h, v_h) \quad \forall  v_h \in  {V_h}.
\end{equation}
Let $\mu_{h,k}$ $(k=1,\ldots, n)$ be the non-zero eigenvalues of \cref{abstract_fem_inverse}.
The eigenvalues of \cref{abstract fem} are simply the inverses of $\mu_{h,k}$'s, i.e., $\lambda_{h,k} = \mu_{h,k}^{-1}$ $(k=1,\ldots, n)$.
Moreover, The eigenvalues of \cref{abstract_fem_inverse} can be rigorously calculated by solving a matrix eigenvalue problem with verified numerical methods.\\

\end{remark}

Denote the Rayleigh quotient over $\widetilde{{V}}$ by $R(\cdot)$ as follows:
for any ${v} \in\widetilde{{V}}$, $\| {v}\|_N \neq 0$,
\begin{equation}
R({v}):=
\frac{M(v,v)}{N(v, v)}.
\end{equation}
The stationary values and points of $R(\cdot)$ over ${V}$ and ${V}_h$ thus
correspond to the eigenpairs of the eigenvalue problems (\ref{abstract problem})
and (\ref{abstract fem}), respectively.
For all $\lambda_k$ and $\lambda_{h,k}$, the min-max principle asserts that
\begin{equation}\label{eq:min-max}
\lambda_k     =\min_{S_k}\max_{\substack{v\in S_k \\ \|v\|_N \neq 0}}R(v)
              = \max_{\substack{v\in E_k \\ \|v\|_N \neq 0}}R(v), \,\,
\lambda_{h,k} =\min_{S_{h,k}}\max_{\substack{v_h\in S_{h,k}\\ \|v_h\|_N \neq 0}}R(v_h)
              = \max_{\substack{v_h\in E_{h,k} \\ \|v_h\|_N \neq 0}}R(v_h),
\end{equation}
where $S_k$ (resp. $S_{h,k}$) is a $k$-dimensional subspace of $\KKerOrth$ (resp. $\KKerHOrth$ )
and $E_k$ (resp. $E_{h,k}$) is the space spanned by the eigenfunctions $\{{u}_i\}_{i=1}^{k}$ (resp. $\{{u}_{h,i}\}_{i=1}^{k}$).\\

Let $P_h:\widetilde{{V}}\mapsto{V}_h$
be the projection with respect to ${M}(\cdot,\cdot)$: given
${u}\in\widetilde{{V}}$, $P_h u \in V_h $ satisfies
\begin{equation}
{M}({u}-P_h {u}, v_h) = 0 \quad \forall v_h\in{V}_h.
\end{equation}

Next, we present a theorem that provides the lower eigenvalue bounds.

\begin{theorem}[Lower eigenvalue bounds]\label{framework_theorem}
Suppose that there exists a positive constant $C_h$ such that
\begin{equation}\label{framework_projection_estimation}
\|u-P_h u\|_{N} \leq C_h \|u-P_h u\|_{M} \quad \forall  u\in {V}\:.
\end{equation}
Let $\lambda_k$ and $\lambda_{h,k}$ be as defined in \cref{abstract problem} and \cref{abstract fem}.
Lower eigenvalue bounds are then given by
\begin{equation}\label{framework_lower_bounds}
\lambda_k \ge \frac{\lambda_{h,k}}{1+C_h^2\lambda_{h,k}}, ~~ k=1,2,\ldots,\min(n,d ).
\end{equation}
\end{theorem}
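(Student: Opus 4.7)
The plan is to apply the min--max characterization \cref{eq:min-max} of $\lambda_{h,k}$ to a $k$-dimensional trial subspace of $\KKerHOrth$ obtained from the first $k$ continuous eigenfunctions via the $M$-projection $P_h$, and to bound the Rayleigh quotient on this subspace using the projection estimate \cref{framework_projection_estimation}.

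Rewriting \cref{framework_lower_bounds} in the equivalent form $1/\lambda_{h,k}\ge 1/\lambda_k-C_h^2$, the inequality is trivial when $\lambda_k\ge 1/C_h^2$, so I focus on the case $\lambda_k<1/C_h^2$. Setting $E_k:=\mbox{span}\{u_1,\dots,u_k\}\subset\KKerOrth$ and recalling that $M(u,u)\le\lambda_k N(u,u)$ for every $u\in E_k$, the central step is to prove
\[
N(P_h u, P_h u)\ \ge\ \bigl(1/\lambda_k-C_h^2\bigr)\,M(P_h u, P_h u)\qquad\forall u\in E_k.
\]
I would obtain this by expanding $\|u\|_N^2=\|P_h u\|_N^2+2N(P_h u,u-P_h u)+\|u-P_h u\|_N^2$, dominating the cross term via Young's inequality $2|N(P_h u,u-P_h u)|\le\alpha\|P_h u\|_N^2+\alpha^{-1}\|u-P_h u\|_N^2$ with the sharp weight $\alpha=\lambda_k C_h^2/(1-\lambda_k C_h^2)$, invoking \cref{framework_projection_estimation} to control $\|u-P_h u\|_N^2$, and combining this with the $M$-Pythagorean identity $\|u-P_h u\|_M^2=\|u\|_M^2-\|P_h u\|_M^2$ and the Rayleigh bound $\|u\|_N^2\ge\|u\|_M^2/\lambda_k$ available on $E_k$.

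The displayed inequality implies $\|P_h u\|_N>0$ whenever $u\ne 0$, so $P_h$ is injective on $E_k$ and $P_h E_k\cap\KKerH=\{0\}$. $M$-orthogonally projecting $P_h E_k$ into $\KKerHOrth$ therefore yields a $k$-dimensional test subspace $S_{h,k}\subset\KKerHOrth$; since this projection only discards a component on which $\|\cdot\|_N$ vanishes, it does not increase the Rayleigh quotient. Applying \cref{eq:min-max} to $\lambda_{h,k}$ with trial space $S_{h,k}$ then gives $\lambda_{h,k}\le \lambda_k/(1-C_h^2\lambda_k)$, and rearranging produces \cref{framework_lower_bounds}.

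The delicate point is the sharp tuning of the Young weight $\alpha$: a naive choice such as $\alpha=1$ only delivers the weaker bound $\lambda_k/(1-C_h\sqrt{\lambda_k})^2$, whereas the stated form requires the specific value that balances the two occurrences of $\lambda_k C_h^2$ in the estimate.
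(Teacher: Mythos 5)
Your argument is correct, but it runs in the opposite direction to the paper's. The paper never touches the continuous eigenfunctions: it uses the compactness assumption (A3) to obtain the max--min characterization \cref{min--max-max-min} of auxiliary eigenvalues $\overline{\lambda}_k$ over the enlarged space $\widetilde{V}=V+V_h$, takes the discrete eigenspace $E_{h,k-1}$ as the constraint space as in \cref{inf-low-bound}, and then bounds $R(v)$ from below on $E_{h,k-1}^{\perp}$ by splitting $v=P_hv+(I-P_h)v$ and further splitting $P_hv$ into its $\KKerHOrth$- and $\KKerH$-components; squaring the resulting triangle inequality with Cauchy--Schwarz gives $\|v\|_N^2\le(\lambda_{h,k}^{-1}+C_h^2)\|v\|_M^2$ with no case distinction. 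You instead bound $\lambda_{h,k}$ from above by inserting $Q(P_hE_k)$ (with $Q$ the $M$-projection of $V_h$ onto $\KKerHOrth$) as a trial space in the discrete min--max \cref{eq:min-max}, which requires the case split $\lambda_kC_h^2<1$, the sharply tuned Young weight, and a dimension count for the test space. Both routes are legitimate: the paper's proof avoids the condition $\lambda_kC_h^2<1$ entirely and never needs injectivity of $P_h$ on $E_k$ nor the auxiliary projection $Q$ (which is exactly how it circumvents the obstruction $P_h(\KKerOrth)\neq\KKerHOrth$ highlighted in Remark \ref{counter-example}); your proof buys independence from the max--min principle over $\widetilde{V}$, i.e.\ from the Babu\v{s}ka--Osborn compactness argument for $\overline{\lambda}_k$, using only the discrete min--max and the spectral decomposition of $\mathcal{K}$. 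One small repair is needed: your displayed inequality $N(P_hu,P_hu)\ge(1/\lambda_k-C_h^2)\,M(P_hu,P_hu)$ is vacuous when $P_hu=0$, so it does not by itself yield $\|P_hu\|_N>0$; add the one-line estimate $\|P_hu\|_N\ge\|u\|_N-C_h\|u-P_hu\|_M\ge(\lambda_k^{-1/2}-C_h)\|u\|_M>0$ for $0\neq u\in E_k$, which settles both the injectivity of $P_h$ on $E_k$ and $P_hE_k\cap\KKerH=\{0\}$. With that supplement, your central computation (Young weight $\alpha=\lambda_kC_h^2/(1-\lambda_kC_h^2)$, the $M$-Pythagorean identity, and $\|u\|_N^2\ge\lambda_k^{-1}\|u\|_M^2$ on $E_k$) is verified to give $\lambda_{h,k}\le\lambda_k/(1-C_h^2\lambda_k)$, hence \cref{framework_lower_bounds}.
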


\begin{proof}
Since $\|\cdot\|_N$ is compact in $\widetilde{{V}}$ with respect to $\|\cdot\|_M$, resulting from the argument of compactness (see \S 8 of \cite{Babuska-Osborn-1989}), there exists $(0<)\overline{\lambda}_1 \le \overline{\lambda}_2 \le \ldots $ such that
\begin{equation}
\label{min--max-max-min}
\overline{\lambda}_k =
\min_{S^k \subset \widetilde{{V}}} \max_{v \in S^k}
R(v)=
\max_{W\subset \widetilde{{V}}, \mbox{dim} ( W) \le k-1} \quad \min_{ v\in W^\perp }
R(v)
\:,
\end{equation}
where $S^k$ denotes any $k$-dimensional subspace of $\widetilde{{V}}$;
$ W^{\perp}$ denotes the orthogonal complement of $W$ in $\widetilde{{V}}$ respect to $M(\cdot, \cdot)$.
Since $V\subset \widetilde{{V}}$, we have $\lambda_k \ge \overline{\lambda}_k$ due to the min--max principle.
Further, by choosing $W$ in (\ref{min--max-max-min}) as $E_{h,k-1}:=\mbox{span}\{u_{h,1},\ldots, u_{h,k-1}\}$,
a lower bound for $\lambda_k$ is obtained:
\begin{equation}
\label{inf-low-bound}
\lambda_k \ge \overline{\lambda}_k \ge \min_{ v \in E_{h,k-1}^{\perp} } R(v)\:.
\end{equation}

Let $E_{h,k-1}^{\perp,h}$ denote the orthogonal complement of $E_{h,k-1} $ in $\KKerHOrth$ with respect to $M(\cdot,\cdot)$.
Hence,
$V_h=E_{h,k-1}\oplus E_{h,k-1}^{\perp,h} \oplus \mbox{Ker}(\mathcal{K}_h) $.
Then $\widetilde{{V}}$ can be decomposed by:
$$
\widetilde{{V}} =V_h \oplus {V_h}^{\perp} = E_{h,k-1} \oplus E_{h,k-1}^{\perp,h}  \oplus \mbox{Ker}(\mathcal{K}_h)  \oplus {V_h}^{\perp}\:.
$$
Moreover, we have $E_{h,k-1}^{\perp}=E_{h,k-1}^{\perp,h}  \oplus \mbox{Ker}(\mathcal{K}_h)  \oplus {V_h}^{\perp}$.
For any $v\in E_{h,k-1}^{\perp}$, we have
$$v=P_h v + (I-P_h)v,\quad P_h v \in E_{h,k-1}^{\perp,h} \oplus \mbox{Ker}(\mathcal{K}_h) ,\quad
(I-P_h)v \in {V_h}^{\perp}\:.
$$
Further, decompose $P_h v$ by $P_h v=v^\perp + v_0 $, where $v^\perp\in E_{h,k-1}^{\perp,h}$, $v_0 \in  \mbox{Ker}(\mathcal{K}_h) $.
Therefore, we have
$\|v^\perp\|_N \le \lambda_{h,k}^{-1/2} \|v^\perp \|_M$ by noticing that
$$
\lambda_{h,k} = \min_{ v \in E_{h,k-1}^{\perp,h} } R(v)\:.
$$
Since $\|P_hv\|_N = \|v^\perp\|_N$, $\|v^\perp\|_M \le \|P_h v\|_M$,
from condition \cref{framework_projection_estimation}, we have
$$
\|v\|_N \le \|P_hv\|_N + \|v-P_hv\|_N \le \lambda_{h,k} ^{-1/2} \|P_hv \|_M + C_h \| v-P_hv\|_M \:.
$$
which leads to
$$
\|v\|^2_N \le \left( \lambda_{h,k}^{-1} + C_h^2 \right) ( \|P_hv \|^2_M + \|v- P_hv \|^2_M)
= \left( \lambda_{h,k}^{-1} + C_h^2 \right) \| v \|^2_M\:.
$$
Hence, we obtain
$$
R(v) \ge
\lambda_{h,k} /\left(1+ C_h^2 \lambda_{h,k} \right) \quad
\mbox{ for any } v \in E_{h,k-1}^{\perp}
\:.
$$
Using (\ref{inf-low-bound}), we can draw the conclusion in \cref{framework_theorem}.
\end{proof}

\vskip 0.5cm

\begin{remark}
\cref{framework_theorem} provides the same lower bound as in \cite{Liu-2015} if
the function space $V$ in \cite{Liu-2015} is taken as $\mbox{Ker}(\mathcal{K})^\perp$ here.
However, one cannot give the proof of \cref{framework_theorem} by just simply
replacing $\widetilde{{V}}$ with $\mbox{Ker}(\mathcal{K})^\perp$ in the proof of \cite{Liu-2015}.
This is because that generally,
\begin{equation}
P_h \left(\KKerOrth \right) \not= \KKerHOrth, \quad 
P_h \left(\KKer \right) \not= \KKerH\:.
\end{equation}
A concrete example is the Steklov eigenvalue problem to be discussed in next section; see Remark \ref{counter-example}.
\quad

\end{remark}

\vskip 0.5cm

\section{The Steklov eigenvalue problem}\label{section-steklov}

In the rest of this paper, we consider eigenvalue bounds for the Steklov eigenvalue problem \cref{eq:steklov-eig-pro},
where $\Omega$ is taken as an $\mathbb{R}^{2}$ domain.
As a remark, the method to be introduced here can be applied to the one with mixed boundary conditions
and there is essentially no difficulty to deal with more general Steklov-type eigenvalue problems.

\subsection{Preliminaries}\label{section-steklov-preliminary}
The analysis is undertaken within the framework of Sobolev spaces.
Let $\Omega$ be a connected bounded domain in $\mathbb{R}^{d}$ ($d=1,2$).
The $L^2(\Omega)$ function space is the set of real square integrable functions over $\Omega$,
for which the inner product is denoted by $(\cdot, \cdot)_{\Omega}$.
We shall use the standard notation for the Sobolev spaces $W^{k,p}(\Omega)$ and their
associated norms $\|\cdot\|_{k,p,\Omega}$ and seminorms $|\cdot|_{k,p,\Omega}$
(see, e.g., Chapter 1 of \cite{Brenner+Scott2002} and Chapter 1 of \cite{Ciarlet2002}).
For $p=2$, we define
$H^k(\Omega)=W^{k,2}(\Omega)$, 
$\|\cdot\|_{k,\Omega} = \|\cdot\|_{k,2,\Omega}$
and
$|\cdot|_{k,\Omega} = |\cdot|_{k,2,\Omega}$.\\

%$H_0^1(\Omega)=\{v\in H^1(\Omega):\ v|_{\partial\Omega}=0\}$,
%where $v|_{\partial\Omega}=0$ is in the sense of trace,
%$\|\cdot\|_{s,\Omega}=\|\cdot\|_{s,2,\Omega}$.

%
%
% and
%then apply \cref{framework_theorem} to obtain lower bounds for Steklov eigenvalue problem.
%

To bound the eigenvalues of \cref{eq:steklov-eig-pro} by applying the results in \S2, we take $V=H^1(\Omega)$ and define
%
%Introduce the inner products $M(\cdot,\cdot)$, $N(\cdot,\cdot)$ for $V$, $W$, respectively.
\begin{equation}
\label{M-N-def}
{M(u,v)} := \displaystyle \int_{\Omega}\nabla u \cdot \nabla v + uv\,\ud\Omega,\quad
{N(u,v)} := \displaystyle{ \int_{\partial\Omega}(\gamma u) \: (\gamma v) \,\ud s \quad \forall  u,v \in {V}},
\end{equation}
where $\gamma:\,\,H^1(\Omega)\mapsto L^2(\partial\Omega)$ is the trace operator.
Under the current domain boundary assumption, $\gamma$ is a compact operator; see, e.g., \cite{Demengel-2012}.
%
%where
%Define $\|\cdot\|_M$ by $\|u\|_M = \sqrt{M(u,u)}$, $\forall u\in {V}$
%and note that $\|u\|_M = \|u\|_{1,\Omega}$; define $\|\cdot\|_N$ by $\|u\|_N = \sqrt{N(u,u)}$,
%$\forall u\in{W}$.

Let us restrict the domain $\Omega$ of \cref{eq:steklov-eig-pro} to be a bounded polygonal domain in $\mathbb{R}^{2}$, and select the finite-dimensional spaces ${V}_h$ as
finite element spaces.
Let $K^h$ be a triangular subdivision of $\Omega$,
$K_b^h$ be the set of elements of $K^h$ having an edge on $\partial \Omega$, $\varepsilon^h$ be the set of all edges of $K^h$, and
$\varepsilon^h_b$ be the set of all boundary edges of $K^h$.
Given an element $K \in K^h$, $h_K$ denotes the length of the longest edge of $K$.
In addition, to make the {\it a priori} error estimate concise (see the proof of \cref{trace theorem corollary 2}),
we further require that all elements $K$ of $K^{h}$ have at most one edge on the boundary
of the domain.

Now, let us introduce the Crouzeix--Raviart finite element space ${V}_h$ on $K^h$:
\begin{equation}
\label{fem space}
\begin{split}
{V}_h := & \{  v \:|\: v \mbox{ is a piecewise-linear function on $K^h$ and continuous at }\\
 & \mspace{28mu} \mbox{the mid-points of  interior edges}\}.
\end{split}
\end{equation}
Since $V_h \not\subset H^1(\Omega)$, we introduce the discrete gradient operator $\nabla_h$, which takes the gradient element-wise for $v_h\in V_h$.
The seminorm $\| \nabla_h v_h \|_{(L^2(\Omega))^2}$ is still denoted by $|v_h|_{1,\Omega}$.

The restriction of the trace operator $\gamma$ to $V_h$, denoted by $\gamma_h$, is well-defined for $v_h \in {V}_h$,
if we regard $\gamma$ as an element-wise operator on the boundary elements.

The extension of $M$ to ${V}_h$ is defined by
\begin{equation}
M(u_h,v_h) := \sum_{K\in K^h}\int_{K}(\nabla u_h \cdot \nabla v_h + u_hv_h)\,\ud K,
                          \ \quad \forall  u_h, v_h\in{V}_h\:.
\end{equation}
Let $\widetilde{V}:=V+V_h$. The above settings thus satisfy the assumptions (A1)--(A4).
The kernel spaces $\KKer$ and $\KKerH$ are determined by the trace operator $\gamma$ as follows:
\begin{eqnarray}
\KKer  &=& \mbox{Ker}(\gamma):= \{v \in H^1(\Omega), v=0 \mbox{ on } \partial \Omega \} \\
\KKerH &=& \mbox{Ker}(\gamma_h):= \{v_h \in V_h, v_h =0 \mbox{ on } \partial \Omega \}.
\end{eqnarray}

With the above definitions in place, we can follow \S2 and define the variational
eigenvalue problem
for \cref{eq:steklov-eig-pro} and the discrete problem in finite element space.

\paragraph*{Variational form of the Steklov eigenvalue problem}
Find $(\lambda,u)\in \mathbb{R}\times \mbox{Ker}(\gamma)^\perp$, s.t.,
\begin{equation}\label{steklov problem}
M(u,v)=\lambda N( u, v), \,\ \quad \forall  v\in \mbox{Ker}(\gamma)^\perp.
\end{equation}

\paragraph*{Eigenvalue problem in finite element space}
Find $(\lambda_h,u_h)\in \mathbb{R}\times \mbox{Ker} (\gamma_h)^\perp$,  s.t.,
\begin{equation}\label{steklov fem}
M(u_h,v_h)=\lambda_h N( u_h, v_h), \,\ \quad \forall  v_h\in \mbox{Ker} (\gamma_h)^\perp.
\end{equation}
We use the same notation for the eigenpairs
of \cref{steklov problem} and \cref{steklov fem} as in \S2. \\

%It is easy to see \cref{steklov fem} has $n$ non-zero eigenvalues, where $n=\mbox{dim}({W}_h)$.

Let $P_h:{V}+{V}_h \mapsto {V}_h$ be the projection operator with respect to
the inner product $M(\cdot,\cdot)$. For $u\in {V}+{V}_h$, $P_h u$ satisfies
\begin{equation}
M(u-P_h u, v_h) = 0 \quad \forall v_h\in {V}_h.
\end{equation}
In \S\ref{sec:c_h_est_interpolation} and \S\ref{sec:c_h_est_projection}, we show how to
obtain the following explicit bound for the projection error constant $C_h$ required by \cref{framework_theorem}:
\begin{equation}
\label{main_result_ch}
\|u-P_h u\|_{L^{2}(\partial \Omega)} \leq C_h \|u-P_h u\|_{1,\Omega} \quad \forall u \in {V}\:.
\end{equation}

In preparation for evaluating $C_h$, let us introduce the Crouzeix--Raviart interpolation operator $\Pi_h: {V}\mapsto {V}_h$, which is defined element-wise.
For any given $K$, whose edges are denoted by $e_1,e_2$ and $e_3$,
$(\Pi_h u)|_K$ is a linear polynomial satisfying
\begin{equation}
\int_{e_i} (\Pi_h u)|_K \,\ud s = \int_{e_i}  u \,\ud s = 0, \,\,i=1,2,3\:.
\end{equation}

\paragraph{Orthogonality of interpolation $\Pi_{h}$ } The interpolation operator $\Pi_h$ has an important orthogonality property:
\begin{equation}
\label{orthogonality_interpolation}
(\nabla_h (\Pi_h u - u), \nabla_h v_h )_\Omega =0 \quad \forall v_h \mbox{ in } V_h\:.
\end{equation}
To prove the above equation, it is sufficient to show that the following equation holds for each element $K$ of $K^h$,
$$
\int_{K} \nabla (\Pi_h u -u) \cdot \nabla v_h \mbox{d} K =
\int_{\partial K} (\Pi_h u -u) \: \nabla  v_h \! \cdot \! \vec{n} \ud s - \int_K  (\Pi_h u -u) \Delta v_h \ud K =0\:.
$$
The last equality holds because of the definition of $\Pi_hu$ and the fact that $v_h|_K$ is a linear function.

\begin{remark}
\label{counter-example}
 The projection operator $P_h(=\Pi_h)$ does not mapping $\KKer$ to $\KKerH$.
A counterexample is to consider the triangulation of the unit square domain with two triangle elements; see Fig. \ref{fig-mesh-two-elements}.
Let $K$ be the element with vertices $(0,0)$, $(0,1)$ and $(1,0)$. Denote the only interior edge by $e$.
Suppose $u=0 $ on $\partial \Omega$ and $\int_e u~ \ud s=1$. Then, $(\Pi_h u)|_K =\sqrt{2}(x+y -1/2)$ and $\Pi_hu\not \in \KKerH$.
\begin{figure}[ht]
\begin{center}
\includegraphics[height=1.5in]{./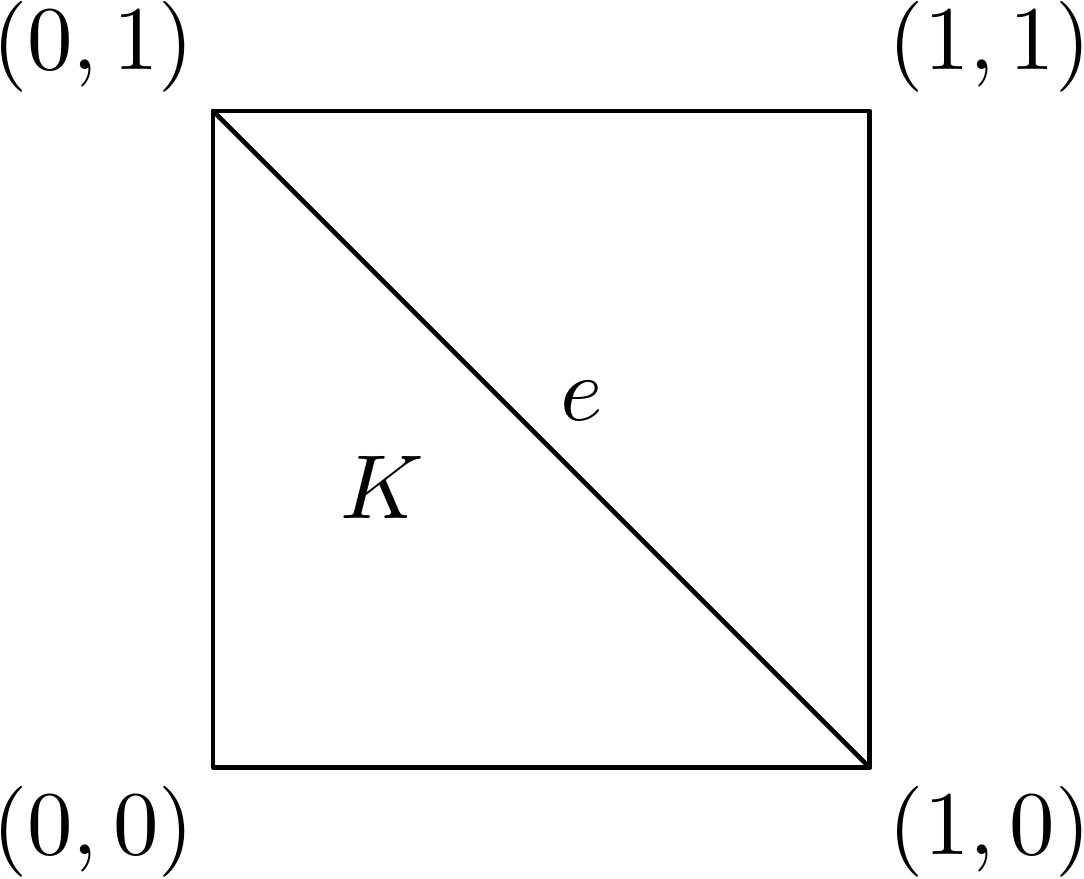}
\end{center}
\caption{Special triangulation of unit square domain \label{fig-mesh-two-elements}}
\end{figure}
\end{remark}

In next subsection, we focus on error estimation for $\Pi_h$, which helps to obtain
the explicit bound \cref{main_result_ch} for $C_h$.

\subsection{Error estimate for interpolation operator}
\label{sec:c_h_est_interpolation}

First, we quote a result of the restriction of $\Pi_{h}$ to an element $K$, which is still denoted by $\Pi_{h}$.

\begin{lemma}[Liu \cite{Liu-2015}]
For any triangle element $K$, whose longest edge length is denoted by $h_K$,
we have
\begin{equation}\label{liu result}
\|u-\Pi_h u\|_{0,K} \leq 0.1893 h_K |u-\Pi_h u|_{1,K} \quad \forall u \in H^1(K)\:.
\end{equation}
\end{lemma}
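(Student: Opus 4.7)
The plan is to recognize the claim as a Poincaré-type inequality on a codimension-three subspace, use a homothety to reduce to a reference family of triangles, and then invoke an explicit eigenvalue bound. First, I would observe that by the defining property of $\Pi_h$, the function $w := u - \Pi_h u$ satisfies $\int_{e_i} w \, \mathrm{d}s = 0$ on each of the three edges $e_1, e_2, e_3$ of $K$. Consequently, the estimate \eqref{liu result} is equivalent to the Poincaré-type bound
\begin{equation*}
\|w\|_{0,K} \le C(K)\, |w|_{1,K} \quad \forall\, w \in W(K),
\end{equation*}
where $W(K) := \{v \in H^1(K) : \int_{e_i} v\, \mathrm{d}s = 0,\ i=1,2,3\}$. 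The sharp constant $C(K)$ equals $\mu_1(K)^{-1/2}$, where $\mu_1(K)$ is the smallest eigenvalue of the constrained Rayleigh quotient $|v|_{1,K}^2/\|v\|_{0,K}^2$ over $W(K)$.

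Second, by a homothety of factor $h_K$, the constant scales as $C(K) = h_K \cdot C(\widehat{K})$, where $\widehat{K}$ is $K$ rescaled to have longest edge of unit length; by Euclidean invariance we may further fix this edge in a standard position. The problem reduces to establishing the uniform bound $C(\widehat{K}) \le 0.1893$ over the resulting two-parameter family of unit-longest-edge triangles.

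Third, the Euler–Lagrange system for the constrained minimization takes the form
\begin{equation*}
-\Delta \phi = \mu \phi \text{ in } \widehat{K}, \qquad \partial_n \phi = c_i \text{ on } e_i,\ i = 1,2,3,
\end{equation*}
with Lagrange multipliers $c_i$ enforcing the edge-mean constraints. Explicit analysis on a reference triangle (naturally the right isosceles one), combined with a shape-monotonicity argument in the vertex parameters, would yield the constant $0.1893$.

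The hardest step is this last one. The Euler–Lagrange eigenvalue problem is not separable on a generic triangle, so obtaining a rigorous shape-\emph{uniform} upper bound is delicate: one must either carry out a careful geometric comparison that relates an arbitrary triangle to a triangle for which $\mu_1$ can be estimated in closed form, or replace the closed-form step by a verified numerical eigenvalue enclosure (e.g.\ via interval arithmetic) over the admissible family. This is the technical content that Liu establishes in the cited work, and I would invoke it as a black box rather than reprove it here.
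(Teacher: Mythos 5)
Your reduction---observing that $w=u-\Pi_h u$ has zero mean on each edge, recasting \cref{liu result} as a constrained Poincar\'e inequality on $W(K)$ with sharp constant $\mu_1(K)^{-1/2}$, scaling by homothety to triangles with unit longest edge, and delegating the explicit shape-uniform bound $0.1893$ to the cited work---is sound and is essentially what the paper itself does, since the paper states this lemma as a quoted result of Liu \cite{Liu-2015} and offers no proof of its own. The one caveat is that your sketched mechanism for the hard step (an explicit Euler--Lagrange analysis on the right isosceles triangle plus a shape-monotonicity argument) is speculative and not obviously viable---the shape dependence of this constant is exactly the delicate point---but since you explicitly invoke \cite{Liu-2015} as a black box for that step, nothing essential is missing.
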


Next, we estimate $\|u-\Pi_h u\|_{N}$ using information on the boundary elements.

Consider the triangle $K$ (see \cref{Triangle_Figure}),
whose nodes are denoted by $\mathbf{P}_1$, $\mathbf{P}_2$, and $\mathbf{P}_3$. The edge $\mathbf{P}_1\mathbf{P}_2$ is denoted by $e$.
Define the height of triangle $K$ respect to edge $e$ by $H_K$. Thus,
\begin{equation}
\label{two-para-K}
H_K = 2|K|/|e|\:.
\end{equation}

\iffalse
\begin{figure}[h]
\begin{center}
\begin{picture}(0,80)(0,106)
\put(-170,-200){\includegraphics[width=11cm]{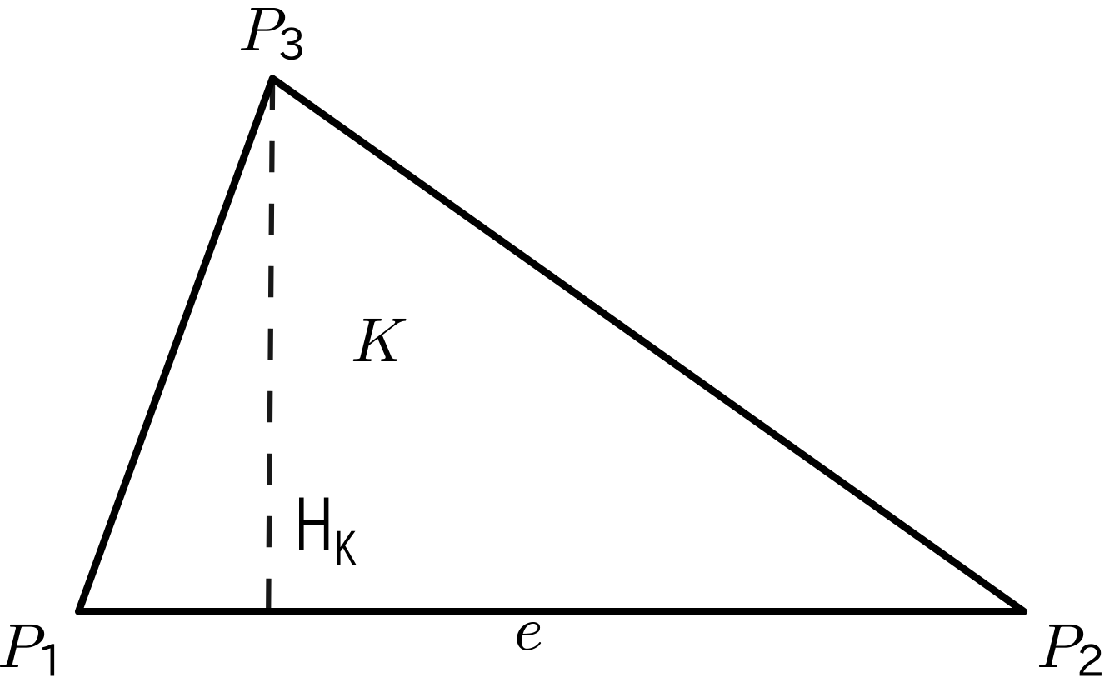}}
\end{picture}
\end{center}
\caption{Parameterization of the triangle $K$}
\label{Triangle_Figure}
\end{figure}
\fi

\begin{figure}[h]
\begin{center}
\includegraphics[width=6cm]{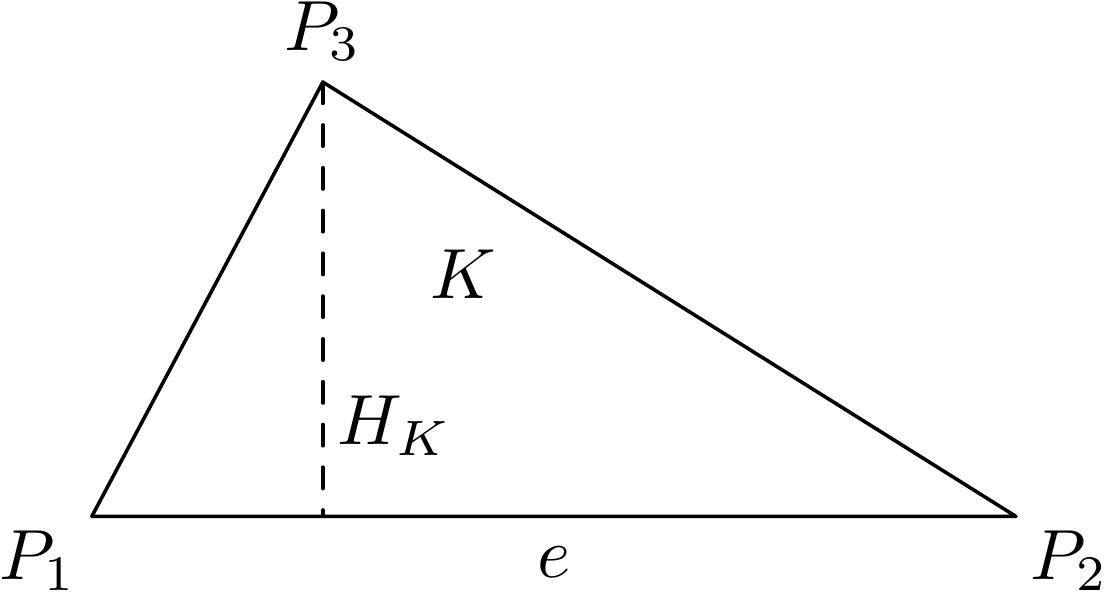}
\end{center}
\caption{Parametrization of the triangle $K$}
\label{Triangle_Figure}
\end{figure}

\begin{theorem}[Interpolation error estimate]\label{trace theorem 2}
For a given element $K$, configured as in \cref{Triangle_Figure},
the following error estimate holds for any $u\in H^1(K)$:
\begin{equation}\label{trace formula 2}
\|u-\Pi_h u\|_{0,e} \leq 0.6711 \frac{h_K}{\sqrt{H_K}}|u-\Pi_h u|_{1,K}.
\end{equation}
\end{theorem}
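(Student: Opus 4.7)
The plan is to exploit the mean-zero condition $\int_e (u-\Pi_h u)\,\ud s = 0$, which is the defining property of the Crouzeix--Raviart interpolant on the edge $e$, combined with the sharp element-level estimate \cref{liu result}. Setting $w := u-\Pi_h u$, I would choose coordinates placing $e$ on the $x$-axis (of length $|e|$) with the vertex opposite $e$ at height $H_K$, and apply Green's theorem on $K$ with the vector field $\vec F = (0,\, y/H_K - 1)$: this field satisfies $\vec F\cdot \vec n = 1$ on $e$, has $\mathrm{div}\,\vec F = 1/H_K$, and vanishes at the vertex opposite $e$.

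The resulting identity expresses $\int_e w^2\,\ud s$ as the sum of $\tfrac{1}{H_K}\|w\|_{0,K}^2$, a bilinear term $-2\int_K w\,\partial_y w\,(1-y/H_K)\,\ud K$, and boundary contributions on the two edges of $K$ other than $e$. The mean-zero condition on $e$ is what allows the bilinear term to be absorbed into the element $L^2$ contribution rather than producing a cross product $\|w\|_{0,K}|w|_{1,K}$: a suitably chosen constant can be subtracted from $w$ inside the boundary integrand at no cost, and this cancellation is the crucial source of the sharp scaling. After the cancellation the estimate should reduce to $\|w\|_{0,e}^2 \le (C_1/H_K)\|w\|_{0,K}^2$ for an explicit constant $C_1$, and then substituting \cref{liu result} gives $\|w\|_{0,e}^2 \le C_1\cdot (0.1893)^2\,(h_K^2/H_K)\,|w|_{1,K}^2$, yielding the claimed bound provided $\sqrt{C_1}\cdot 0.1893 = 0.6711$.

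The main obstacle is the second step: actually performing the cancellation so that the constant comes out sharp. A direct application of Cauchy--Schwarz to the bilinear term would instead produce the cross product $\|w\|_{0,K}\,|w|_{1,K}$, which when combined with \cref{liu result} leaves an $O(h_K)$ contribution on top of the desired $O(h_K^2/H_K)$ term, spoiling the scaling. Pinning down the exact value $0.6711$ is delicate; it may require an auxiliary analysis on a reference triangle together with a quantitative Poincar\'e-type estimate on $e$, and one must also verify that the boundary contributions from the other two edges of $K$ do not degrade the constant.
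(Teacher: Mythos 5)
Your proposal has two genuine gaps, and the obstacle that motivates the detour is not actually there. First, the intermediate inequality you aim for after the ``cancellation,'' namely $\|w\|_{0,e}^2 \le (C_1/H_K)\|w\|_{0,K}^2$ for $w=u-\Pi_h u$, is false: the trace of an $H^1$ function on $e$ cannot be controlled by its $L^2(K)$ norm alone, with or without the mean-zero property on $e$. (Take $w$ supported in an $O(\epsilon)$-strip along $e$, oscillating so that $\int_e w\,\ud s=0$; then $\|w\|_{0,e}$ stays of order one while $\|w\|_{0,K}^2=O(\epsilon)$.) Any correct edge estimate must retain a gradient contribution, so no choice of subtracted constant in the boundary integrand can deliver the bound you want. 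Second, your vector field $\vec F=(0,\,y/H_K-1)$ vanishes only at the apex, not along the two lateral edges, so Green's theorem leaves boundary terms $\int (y/H_K-1)\,n_y\,w^2\,\ud s$ there, of indefinite sign and with no mechanism to absorb them; you flag this but do not resolve it. The paper's choice of field, $(x,y)-\mathbf{P}_3$, is tangent to both edges through $\mathbf{P}_3$, so its normal component vanishes identically on them and equals $2|K|/|e|=H_K$ on $e$, which is what makes the identity clean.

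Moreover, the step you regard as the main difficulty --- that Cauchy--Schwarz on the bilinear term ``spoils the scaling'' --- is a misdiagnosis. With the paper's field one gets
\begin{equation*}
H_K\,\|w\|_{0,e}^2 \;\le\; 2\|w\|_{0,K}^2 + 2h_K\,\|w\|_{0,K}\,\|\nabla w\|_{0,K},
\end{equation*}
and inserting \cref{liu result} into \emph{both} occurrences of $\|w\|_{0,K}$ turns the cross term into $2\cdot 0.1893\,h_K^2\|\nabla w\|_{0,K}^2$, the same order as the first term; there is no leftover $O(h_K)$ remainder. This yields $\|w\|_{0,e}\le \sqrt{2(0.1893^2+0.1893)}\,\bigl(h_K/\sqrt{H_K}\bigr)\|\nabla w\|_{0,K}$, and $\sqrt{2(0.1893^2+0.1893)}\le 0.6711$, which is exactly \cref{trace formula 2}. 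So the constant $0.6711$ requires no reference-triangle analysis, no edge Poincar\'e inequality, and no use of the mean-zero condition on $e$ beyond its implicit role inside \cref{liu result}; your consistency requirement $\sqrt{C_1}\cdot 0.1893=0.6711$ reflects a structure of the constant that the (false) pure-$L^2$ trace bound would need, not the one the theorem actually has.
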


\begin{proof}
For any $w \in H^{1}(K)$, the Green theorem leads to
$$
\int_K ((x,y)- \mathbf{P_3} ) \cdot \nabla (w^2) \mbox{d}K =
\int_{\partial K} ((x,y)- \mathbf{P_3} )\cdot {\bm{n}} w^2 \mbox{d}s - \int_K 2 w^2 \mbox{d} K
$$
For the term $((x,y)- \mathbf{P_3} )\cdot {\bm{n}}$, we have
\begin{equation}
((x,y)- \mathbf{P_3} )\cdot {\bm{n}} =
\quad
\left\{
\begin{array}{ll}
0, & \mbox{ on }  \mathbf{P_1}\mathbf{P_3}, ~ \mathbf{P_2}\mathbf{P_3}\:, \\
2 {|K|}/{|e|} & \mbox{ on } e\:.
\end{array}
\right.
\end{equation}
Thus,
\begin{align*}
2\frac{|K|}{|e|}\int_e w^2 \ud s & = \int_K 2 w^2 \mbox{d} K + \int_K ((x,y)- \mathbf{P_3} ) \cdot \nabla (w^2) \mbox{d} K\\
& \le  \int_K 2 w^2 \mbox{d} K + 2h_K \int_K |w| |\nabla w| \mbox{d} K \\
& \le 2\|w\|_{0,K}^2 + 2h_K \|w\|_{0,K} \|\nabla w\|_{0,K}\:.
\end{align*}

Taking $w=u-\Pi_h u$ and applying of estimate \cref{liu result}, we have,
\begin{equation*}
 \|w\|_{0,e}  \le \sqrt{ 0.1893^2+0.1893 }\sqrt{ \frac{|e|}{|K|}} ~ h_K ~ \|\nabla w\|_{0,K}
 \le 0.6711 \frac{h_K}{\sqrt{H_K}} ~  \|\nabla w\|_{0,K} \:. 
\end{equation*}
The above inequality gives the desired result.
\end{proof}

Next, let us apply \cref{trace theorem 2} to show the result related to trace theorem.

\begin{corollary}\label{trace theorem corollary 2}
Given $u\in H^1(\Omega)$,
the following error estimate holds:
\begin{equation}\label{error estimation 2}
\|u-\Pi_h u\|_N \leq {0.6711}
\Big(\max_{K\in K_b^h}\frac{h_K}{\sqrt{H_K}}\Big) |u-\Pi_h u|_{1,\Omega}.
\end{equation}
Here, $h_{K}$ is the length of the longest edge of $K$; $H_{K}$ is the height of $K$ defined in \cref{two-para-K}, 
where edge $\mathbf{P}_1\mathbf{P}_2$ is aligned on the boundary of the domain.

\end{corollary}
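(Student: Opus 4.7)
The plan is to reduce the global trace estimate to a sum of per-element estimates supplied by \cref{trace theorem 2}, using the mesh assumption that each element has at most one edge on $\partial\Omega$.

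First I would decompose the boundary integral into a sum over boundary edges. Since $\varepsilon^h_b$ partitions $\partial\Omega$ (up to measure zero), we have
\begin{equation*}
\|u-\Pi_h u\|_N^2 \;=\; \int_{\partial\Omega} (u-\Pi_h u)^2\,\ud s \;=\; \sum_{e\in\varepsilon^h_b}\|u-\Pi_h u\|_{0,e}^2 .
\end{equation*}
Because of the mesh restriction, each edge $e\in\varepsilon^h_b$ belongs to exactly one element $K\in K^h_b$, and each $K\in K^h_b$ contains at most one such edge; hence the sum over boundary edges can be re-indexed as a sum over $K\in K^h_b$, with edge $e\subset\partial K\cap\partial\Omega$ playing the role of $\mathbf{P}_1\mathbf{P}_2$ in the setup of \cref{Triangle_Figure}.

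Next I would apply \cref{trace theorem 2} element by element. For each $K\in K^h_b$ with boundary edge $e$,
\begin{equation*}
\|u-\Pi_h u\|_{0,e}^2 \;\le\; 0.6711^2\,\frac{h_K^2}{H_K}\,|u-\Pi_h u|_{1,K}^2 .
\end{equation*}
Summing, pulling out the worst geometric factor, and then extending the sum from $K^h_b$ to all of $K^h$ (which only enlarges the right-hand side):
\begin{equation*}
\|u-\Pi_h u\|_N^2 \;\le\; 0.6711^2\Bigl(\max_{K\in K^h_b}\tfrac{h_K^2}{H_K}\Bigr) \sum_{K\in K^h_b}|u-\Pi_h u|_{1,K}^2 \;\le\; 0.6711^2\Bigl(\max_{K\in K^h_b}\tfrac{h_K^2}{H_K}\Bigr)|u-\Pi_h u|_{1,\Omega}^2 .
\end{equation*}
Taking square roots yields \cref{error estimation 2}.

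There is essentially no hard step: the mesh hypothesis (at most one boundary edge per element) is what makes the reduction clean, by preventing an element from being charged twice on the right-hand side. Without that hypothesis one would either need to track multiplicities or to sharpen \cref{trace theorem 2} to cover multiple edges simultaneously. The rest is a direct assembly of the local bound together with the trivial step of replacing a sum of squared seminorms over boundary elements by the global $H^1$ seminorm of $u-\Pi_h u$ (legitimate since $\nabla_h$ is defined element-wise and the contributions from interior elements are nonnegative).
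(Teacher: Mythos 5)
Your proposal is correct and follows essentially the same route as the paper: split $\|u-\Pi_h u\|_N^2$ over the boundary edges, use the one-boundary-edge-per-element assumption to apply \cref{trace theorem 2} once per boundary element without double-counting, pull out the maximal factor $h_K^2/H_K$, and bound the remaining sum by $|u-\Pi_h u|_{1,\Omega}^2$. No gaps; your added remark about why the mesh hypothesis prevents an element from being charged twice matches the paper's own justification.
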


\begin{proof}
The conclusion follows straightforwardly from \cref{trace theorem 2} by noticing that
\begin{eqnarray*}
\|u-\Pi_h u\|^2_N &=& \sum_{e\in \varepsilon_b^h} \|u-\Pi_h u\|^2_{0,e}
\leq {0.6711}^2
      \max_{K\in K_b^h}\frac{h_K^2}{H_K} \sum_{K\in K_b^h}|u-\Pi_h u|_{1,K}^2 \:.
\end{eqnarray*}
Owing to the assumption that all elements $K$ of ${K}^{h}$ have at most one edge on the boundary, the term $|u-\Pi_h u|_{1,K}$ in the above inequality only needs to be counted at most once.
\end{proof}

\begin{remark}\label{remark:trace-constant}
Numerial computations indicate that, when the height of triangle is fixed, the constant $C$ in the estimate 
$\|u-\Pi_h u \|_{0,e} \le C \|\nabla (u-\Pi_h u)\|_{0,K}$ decreases to zero with rate $C=O(|e|^{1/2})$ as 
$|e|$ tends to $0$.
However, this behavior of the constant $C$ cannot be deduced from \cref{trace theorem 2}.
Below is a sketch of the proof for this property.

Define constants $C(K)$ and $C_e(K)$ by
$$
C(K) =\sup_{v\in H^1(K)} \frac{\|u-\Pi_h u\|_{0,e}}{\|\nabla (u-\Pi_h u)\|_{0,K}},\quad
C_e(K) =\sup_{v\in H^1(K), \, \int_e v \ud s=0} \frac{\|u\|_{0,e}}{\|\nabla u\|_{0,K}}\:.
$$
Then, it is easy to see $C(K)\le C_e(K)$. For the purpose of simplicity in the argument, 
assume $K$ to be an acute triangle. 
Let $\widehat{K}$ be a reference triangle with the length of base of being unit.
Suppose $K$ is obtained by scaling $\widehat{K}$ by $h$ ($h=|e|$) along $x$ direction. 
Let $\widetilde{K}$ be the scaled $\widehat{K}$ by $h$ in both $x$ and $y$ directions; see \cref{fig:three_triangles}. 
Noticing that for any $u\in H^1({K})$, $u|_{\widetilde{K}} \in H^1(\widetilde{K})$, 
$$
\|\nabla (u|_{\widetilde{K}})  \|_{0,\widetilde{K}} \le \|\nabla u\|_{0,{K}}\:.
$$
Thus, one can easily show that
\begin{equation}
\label{trace_constants}
 C_e(K)\le C_e(\widetilde{K}),\quad C_e(\widetilde{K}) = \sqrt{|e|} C_e(\widehat{K})\:.
\end{equation}
Hence,
$$
 C_e(K)\le \sqrt{|e|} C_e(\widehat{K})=O(|e|^{1/2})\:.
$$
For $K$ being an obtuse triangle, the relation $\widetilde{K} \subset {K}$ does not hold any more and transformation of triangles is needed. The argument in this case is 
little complicated and omitted here.
\begin{figure}[!ht]
\begin{center}
\includegraphics[width=3.5cm,angle=0]{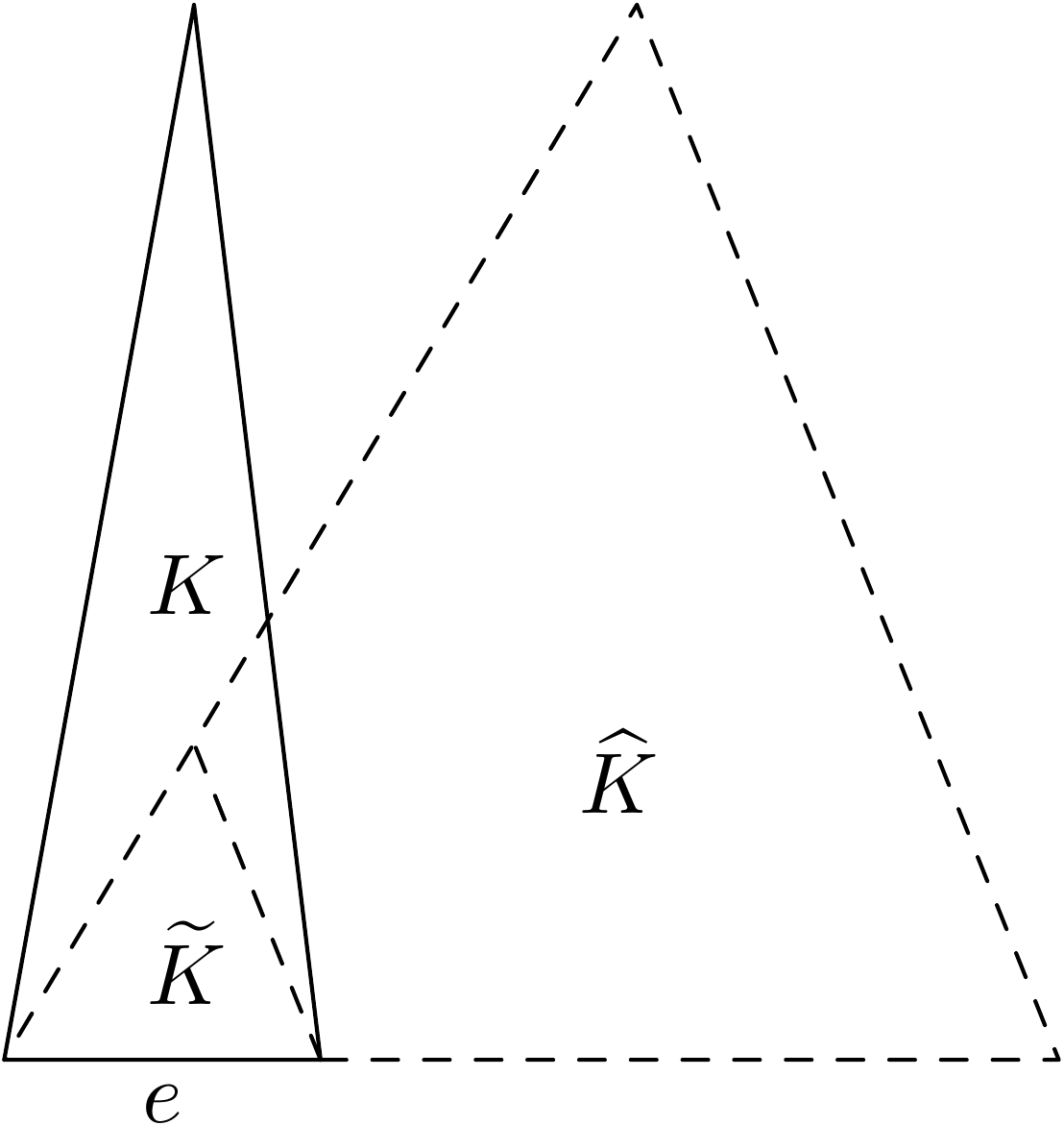}
\caption{Scaling of reference triangle }
\label{fig:three_triangles}
\end{center}
\end{figure}

\end{remark}

\subsection{Error estimate for projection $P_h$}
\label{sec:c_h_est_projection}

In this subsection, we give an explicit bound of $C_h$ required in the error estimate of $P_{h}$ in \cref{main_result_ch} .

First, we estimate the mapping $\mathcal{K}_h$ defined in \cref{K_on_V_h}. That is,
for $\phi_h \in V_h$, $\mathcal{K}_h \phi_h \in V_h$ satisfies
\begin{equation}\label{auxiliary problem}
M(\mathcal{K}_h \phi_h, v_h) = N(\phi_h, v_h) \quad \forall v_h\in {V}_h.
\end{equation}

\begin{lemma}
For all $\phi_h\in {W}_h$, we have
\begin{equation}\label{Inequality_Dual}
\|\mathcal{K}_h \phi_h\|_M \leq \frac{1}{\sqrt{\lambda_{h,1}}} \|\phi_h\|_N,
\end{equation}
where $\lambda_{h,1}$ is the smallest eigenvalue for the discrete Steklov eigenvalue problem \cref{steklov fem}.
\end{lemma}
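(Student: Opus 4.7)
The plan is to test the defining equation of $\mathcal{K}_h$ against the natural choice $v_h = \mathcal{K}_h\phi_h$ and then bring in the smallest discrete eigenvalue $\lambda_{h,1}$ through the Rayleigh quotient. Taking $v_h = \mathcal{K}_h\phi_h$ in \cref{auxiliary problem} immediately gives
$$
\|\mathcal{K}_h\phi_h\|_M^2 \;=\; M(\mathcal{K}_h\phi_h,\mathcal{K}_h\phi_h) \;=\; N(\phi_h,\mathcal{K}_h\phi_h),
$$
and then the Cauchy--Schwarz inequality for the positive semi-definite form $N(\cdot,\cdot)$ yields
$$
\|\mathcal{K}_h\phi_h\|_M^2 \;\le\; \|\phi_h\|_N \, \|\mathcal{K}_h\phi_h\|_N.
$$
So it suffices to control $\|\mathcal{K}_h\phi_h\|_N$ by $\lambda_{h,1}^{-1/2}\|\mathcal{K}_h\phi_h\|_M$.

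The key intermediate step is to verify that $\mathcal{K}_h\phi_h \in \mathrm{Ker}(\mathcal{K}_h)^{\perp}$, which is what allows us to invoke the min--max characterization of $\lambda_{h,1}$. Indeed, for any $w_h\in \mathrm{Ker}(\mathcal{K}_h)$ we have $\|w_h\|_N=0$, hence by Cauchy--Schwarz $N(\phi_h,w_h)=0$, and so the definition of $\mathcal{K}_h$ gives $M(\mathcal{K}_h\phi_h,w_h)=N(\phi_h,w_h)=0$. Thus $\mathcal{K}_h\phi_h$ is $M$-orthogonal to the kernel.

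Once this is in hand, the min--max principle \cref{eq:min-max} applied to \cref{steklov fem} (or equivalently \cref{abstract fem}) yields
$$
\lambda_{h,1} \;=\; \min_{\substack{v_h\in \mathrm{Ker}(\mathcal{K}_h)^\perp \\ \|v_h\|_N\neq 0}} \frac{M(v_h,v_h)}{N(v_h,v_h)},
$$
which, specialized to $v_h = \mathcal{K}_h\phi_h$, gives
$$
\|\mathcal{K}_h\phi_h\|_N \;\le\; \lambda_{h,1}^{-1/2}\,\|\mathcal{K}_h\phi_h\|_M .
$$
Plugging this into the previous bound and dividing through by $\|\mathcal{K}_h\phi_h\|_M$ (the case where it vanishes is trivial) yields the claimed inequality \cref{Inequality_Dual}.

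I do not expect significant obstacles. The only point worth stating carefully is the orthogonality $\mathcal{K}_h\phi_h \perp_M \mathrm{Ker}(\mathcal{K}_h)$, since \cref{Inequality_Dual} would otherwise fail: the Rayleigh-quotient lower bound $\lambda_{h,1}$ only holds on $\mathrm{Ker}(\mathcal{K}_h)^{\perp}$, and in the Steklov setting $\mathrm{Ker}(\mathcal{K}_h)$ (the space of discrete functions vanishing on $\partial\Omega$) is nontrivial, so this verification is not cosmetic.
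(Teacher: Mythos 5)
Your proof is correct and follows essentially the same route as the paper's: test the defining equation with $v_h=\mathcal{K}_h\phi_h$, apply the Cauchy--Schwarz inequality for the semi-definite form $N$, and bound $\|\mathcal{K}_h\phi_h\|_N$ by $\lambda_{h,1}^{-1/2}\|\mathcal{K}_h\phi_h\|_M$ via the Rayleigh-quotient (min--max) characterization of $\lambda_{h,1}$. Your explicit verification that $\mathcal{K}_h\phi_h$ is $M$-orthogonal to $\mathrm{Ker}(\mathcal{K}_h)$, which legitimizes invoking that characterization, is left implicit in the paper but is a worthwhile clarification rather than a deviation.
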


\begin{proof}
From the definition of $\mathcal{K}_h$ in \cref{auxiliary problem}, by selecting $v_h := \mathcal{K}_h \phi_h$, we obtain
\begin{equation}
\label{K_h_est_1}
\|\mathcal{K}_h \phi_h \|_M^2 = N(\phi_h, \mathcal{K}_h \phi_h) \leq \|\phi_h\|_N \| \mathcal{K}_h \phi_h\|_N.
\end{equation}
From the definition of $\lambda_{h,1}$ in \cref{steklov fem} and the min-max principle, we also have
\begin{equation}
\label{K_h_est_2}
\lambda_{h,1}
%= \min_{\substack{v_h\in {V}_h \\ \gamma v_h \neq 0 }}\frac{\|v_h\|_M^2}{\|v_h\|_N^2}
\leq \frac{\|\mathcal{K}_h \phi_h \|_M^2}{\| \mathcal{K}_h \phi_h\|_N^2}\:,
\mbox{ which implies }
\| \mathcal{K}_h \phi_h\|_N \le \frac{1}{\sqrt{ \lambda_{h,1} }} \|\mathcal{K}_h \phi_h \|_M 　\:.
\end{equation}

We can now complete the proof using \cref{K_h_est_1} and \cref{K_h_est_2}.
\end{proof}

\vskip 0.5cm

The following lemma gives an estimate of the difference between the interpolation and projection operators.
\begin{lemma}
    For all $u \in  H^{1}(\Omega)$, we have
\begin{equation}\label{error estimation 1}
\|\Pi_h u - P_h u \|_N \leq C_1 |u - \Pi_h u|_{1,\Omega},
\end{equation}
where
$$
C_1 := \frac{0.1893}{\sqrt{\lambda_{h,1}}}\max_{K\in K^h} h_K.
$$
\end{lemma}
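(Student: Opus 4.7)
The plan is to exploit a duality argument, using the auxiliary mapping $\mathcal{K}_h$ together with two different kinds of orthogonality: the $M$-orthogonality of the projection $P_h$ and the broken-$H^1$ orthogonality of the Crouzeix--Raviart interpolation $\Pi_h$ stated in \cref{orthogonality_interpolation}.

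First I would set $\phi_h := \Pi_h u - P_h u \in V_h$ and test the defining relation \cref{auxiliary problem} of $\mathcal{K}_h$ against $\phi_h$ itself to get
\[
\|\phi_h\|_N^2 = N(\phi_h,\phi_h) = M(\mathcal{K}_h \phi_h,\phi_h) = M(\mathcal{K}_h \phi_h,\Pi_h u - P_h u).
\]
Since $\mathcal{K}_h \phi_h \in V_h$ and $P_h$ is the $M$-projection onto $V_h$, we have $M(\mathcal{K}_h\phi_h, P_h u) = M(\mathcal{K}_h\phi_h,u)$, so the right-hand side collapses to $M(\mathcal{K}_h\phi_h,\Pi_h u - u)$.

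Next I would split $M$ into its gradient part and its $L^2$ part. The orthogonality property \cref{orthogonality_interpolation} of $\Pi_h$, applied with the test function $\mathcal{K}_h \phi_h \in V_h$, annihilates the gradient contribution:
\[
(\nabla_h \mathcal{K}_h\phi_h,\nabla_h(\Pi_h u - u))_\Omega = 0.
\]
Only the $L^2$ inner product survives, so
\[
\|\phi_h\|_N^2 = (\mathcal{K}_h\phi_h,\Pi_h u - u)_\Omega \le \|\mathcal{K}_h\phi_h\|_{0,\Omega}\,\|\Pi_h u - u\|_{0,\Omega}.
\]
This is the crucial step that turns the estimate into a superconvergent one: it replaces an a-priori $O(1)$ bound for the gradient term with a product of two $L^2$ quantities, each of which will scale favourably.

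Finally, I would bound the two factors separately. For the second factor I would apply \cref{liu result} element-wise and sum, yielding
\[
\|\Pi_h u - u\|_{0,\Omega} \le 0.1893 \,(\max_{K\in K^h} h_K)\, |u-\Pi_h u|_{1,\Omega}.
\]
For the first factor I would use $\|\mathcal{K}_h\phi_h\|_{0,\Omega} \le \|\mathcal{K}_h\phi_h\|_M$ followed by \cref{Inequality_Dual} to get $\|\mathcal{K}_h\phi_h\|_{0,\Omega} \le \lambda_{h,1}^{-1/2}\|\phi_h\|_N$. Dividing by $\|\phi_h\|_N$ (the case $\|\phi_h\|_N=0$ being trivial) produces exactly the claimed constant $C_1$.

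The main conceptual hurdle is recognizing that the gradient term on the right-hand side can be made to vanish: once one invokes both orthogonalities in the correct order, the whole argument reduces to applying the already-proven $L^2$ interpolation bound and the $\mathcal{K}_h$ stability estimate. The routine technicalities are the element-wise summation for the global $L^2$ estimate and checking that the factor $\max_K h_K$ is correctly pulled out of the sum.
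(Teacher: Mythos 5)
Your proposal is correct and follows essentially the same duality argument as the paper: test the definition of $\mathcal{K}_h$ with $\Pi_h u - P_h u$, use the $M$-orthogonality of $P_h$ to replace $P_h u$ by $u$, invoke the broken-gradient orthogonality \cref{orthogonality_interpolation} to reduce the bound to an $L^2$ inner product, and conclude with \cref{liu result} and \cref{Inequality_Dual}. The only cosmetic difference is that you explicitly note the trivial case $\|\Pi_h u - P_h u\|_N = 0$, which the paper leaves implicit.
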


\begin{proof}
Take $v_h := \Pi_h u - P_h u$ and let $\psi_h:= \mathcal{K}_h \cdot  v_h \in V_h$. Then
\begin{eqnarray*}
\| v_h\|_N^2 &=& N(v_h, v_h)
=M(\psi_h,  v_h)\\
&=& M(\psi_h, \Pi_h u - u + u - P_h u) = M(\psi_h, \Pi_h u-u)\:.
\end{eqnarray*}
Noting the orthogonality of $\Pi_h u$, as shown in \cref{orthogonality_interpolation}, we have
\begin{eqnarray*}
\|v_h\|_N^2 &=& (\psi_h, \Pi_h u-u)_{\Omega}  \leq \|\psi_h\|_{0,\Omega} \|\Pi_h u-u\|_{0,\Omega} \\
&\leq& \|\psi_h\|_{M}  ~  \|\Pi_h u-u\|_{0,\Omega} \leq \frac{1}{\sqrt{\lambda_{h,1}}} \| v_h\|_N ~ \|\Pi_h u-u\|_{0,\Omega}.
\end{eqnarray*}
As a consequence,
$$
\|\Pi_h u - P_h u\|_N \leq \frac{1}{\sqrt{\lambda_{h,1}}}\|\Pi_h u-u\|_{0,\Omega}.
$$
By \cref{liu result}, we have
$$
\|\Pi_h u-u\|_{0,\Omega} \leq 0.1893 \max_{K\in K^h} h_K |\Pi_h u-u|_{1,\Omega}.
$$
Followed by the above two estimates, we obtain
$$
\|\Pi_h u - P_h u\|_N  \leq \frac{0.1893}{\sqrt{\lambda_{h,1}}}\max_{K\in K^h} h_K |u - \Pi_h u|_{1,\Omega}\:.
$$
From the definition of $C_1$, we get the error estimation \cref{error estimation 1}.
\end{proof}

\begin{theorem}[Projection error estimate]\label{main theorem}
The following error estimate holds:
\begin{equation}
\|u-P_h u\|_N \leq C_h \|u-P_h u\|_M \quad \forall  u \in {V},
\end{equation}
where
\begin{eqnarray}\label{C_h_Definition}
C_h :=  0.6711
        \max_{K\in K_b^h}\frac{h_K}{\sqrt{H_K}}
      + \frac{0.1893}{\sqrt{\lambda_{h,1}}}\max_{K\in K^h} h_K.
\end{eqnarray}
\end{theorem}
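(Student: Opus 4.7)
The plan is to bound the error via a triangle inequality through the Crouzeix--Raviart interpolant $\Pi_h u$ and then combine the two preceding lemmas. Splitting
\[
\|u-P_h u\|_N \le \|u-\Pi_h u\|_N + \|\Pi_h u - P_h u\|_N,
\]
\cref{trace theorem corollary 2} bounds the first piece by $0.6711\,\max_{K\in K_b^h}(h_K/\sqrt{H_K})\,|u-\Pi_h u|_{1,\Omega}$, while \cref{error estimation 1} bounds the second by $C_1\,|u-\Pi_h u|_{1,\Omega}$. Adding these two bounds produces exactly the constant $C_h$ in the statement, multiplied by $|u-\Pi_h u|_{1,\Omega}$, so what remains is the reduction
\[
|u-\Pi_h u|_{1,\Omega} \le \|u-P_h u\|_M.
\]

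For this reduction I would exploit the orthogonality \cref{orthogonality_interpolation} of $\Pi_h$: since $\Pi_h u - P_h u \in V_h$, the cross term $(\nabla_h (u - \Pi_h u),\,\nabla_h(\Pi_h u - P_h u))_\Omega$ vanishes, yielding the Pythagorean identity
\[
|u - P_h u|_{1,\Omega}^2 = |u - \Pi_h u|_{1,\Omega}^2 + |\Pi_h u - P_h u|_{1,\Omega}^2 .
\]
Hence $|u-\Pi_h u|_{1,\Omega} \le |u-P_h u|_{1,\Omega} \le \|u-P_h u\|_M$, the last step holding because the $M$-norm contains the $H^1$ seminorm as a summand. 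Chaining these inequalities yields the claim.

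The point that requires care, and the one I would emphasize in writing it up, is that the natural best-approximation property of $P_h$ runs the wrong way: being the $M$-projection, $P_h u$ satisfies $\|u-P_h u\|_M \le \|u-\Pi_h u\|_M$, which is the opposite of what one needs. This is why one must pass through the \emph{$H^1$-seminorm} orthogonality of $\Pi_h$ in \cref{orthogonality_interpolation}, a property particular to the Crouzeix--Raviart interpolant and not shared by $P_h$. Once that step is identified, everything else is routine book-keeping and no new estimate is required.
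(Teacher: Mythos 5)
Your proposal is correct and follows essentially the same route as the paper's proof: a triangle inequality through $\Pi_h u$, the bounds from \cref{trace theorem corollary 2} and \cref{error estimation 1}, and then the reduction $|u-\Pi_h u|_{1,\Omega}\le |u-P_h u|_{1,\Omega}\le \|u-P_h u\|_M$ via the orthogonality \cref{orthogonality_interpolation}. Your explicit Pythagorean identity simply spells out the step the paper justifies by citing that orthogonality, and your remark that the $M$-best-approximation property of $P_h$ points the wrong way correctly identifies why the $\Pi_h$-orthogonality is the essential ingredient.
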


\begin{proof}
For all $u\in {V}$, by using \cref{error estimation 2} and \cref{error estimation 1}, we obtain
\begin{eqnarray*}
\|u-P_h u\|_N &\leq& \|u-\Pi_h u\|_N + \|\Pi_h u-P_h u\|_N \nonumber\\
&\leq& 0.6711
       \max_{K\in K_b^h}\frac{h_K}{\sqrt{H_K}}  |u-\Pi_h u|_{1,\Omega} \\
&&   + \frac{0.1893}{\sqrt{\lambda_{h,1}}}\max_{K\in K^h} h_K |u-\Pi_h u|_{1,\Omega} \nonumber\\
&=&  C_h |u-\Pi_h u|_{1,\Omega} \leq C_h |u-P_h u|_{1,\Omega} \leq C_h \|u-P_h u\|_M.
\end{eqnarray*}
The second-last inequality holds because of the orthogonality of $\Pi_h$ in \cref{orthogonality_interpolation}.
\end{proof}

\subsection{Explicit lower eigenvalue bounds}
With \cref{framework_theorem} and the explicit error estimate for $P_h$  in \cref{main theorem},
we can now obtain explicit lower bounds for Steklov eigenvalues.

\begin{theorem}[Explicit lower bounds for Steklov eigenvalues]\label{steklov_explicit_eig_bound}
Let $\lambda_{h,i}$ be the approximate eigenvalues of \cref{steklov fem}.
We have the following lower bounds for eigenvalues of the Steklov eigenvalue problem \cref{steklov problem},
\begin{equation}
\label{lower-bound-conclusion}
\lambda_i \ge  \frac{\lambda_{h,i}}{1+C_h^2 \lambda_{h,i}}, ~~ i=1,2,\ldots, n\:.
\end{equation}
Here, $n=dim({\mbox{Ker} (\gamma_h)^\perp})$ and  $C_h$ is the quantity defined in \cref{C_h_Definition}.
\end{theorem}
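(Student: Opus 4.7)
The plan is to observe that this theorem is essentially an immediate corollary of the abstract result in \cref{framework_theorem} combined with the concrete projection error estimate in \cref{main theorem}, so the proof will consist of verifying that the hypotheses of the abstract framework are met in the Steklov setting.

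First, I would check that the Steklov problem fits the abstract setup of \S 2. This amounts to recalling the identifications made in \S\ref{section-steklov-preliminary}: $\widetilde{V} := V + V_h$ with $V = H^1(\Omega)$ and $V_h$ the Crouzeix--Raviart space, together with the bilinear forms $M(\cdot,\cdot)$ and $N(\cdot,\cdot)$ defined in \cref{M-N-def}, extended to $\widetilde V$ elementwise. Assumptions (A1) and (A2) are immediate from the definitions; (A4) is built into the choice of $V_h$; and (A3), the compactness of $\|\cdot\|_N$ relative to $\|\cdot\|_M$, follows from the compactness of the trace operator $\gamma:H^1(\Omega)\to L^2(\partial\Omega)$ noted after \cref{M-N-def}. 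Thus the variational eigenvalue problem \cref{steklov problem} and its discretization \cref{steklov fem} fall exactly under the abstract problems \cref{abstract problem} and \cref{abstract fem}, with the same notation for eigenpairs.

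Next, I would invoke \cref{main theorem}, which has already established the projection error estimate
\begin{equation*}
\|u - P_h u\|_N \le C_h \|u - P_h u\|_M \quad \forall u\in V,
\end{equation*}
with the explicit constant $C_h$ given by \cref{C_h_Definition}. This is precisely hypothesis \cref{framework_projection_estimation} required by the abstract lower-bound theorem.

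Finally, I would apply \cref{framework_theorem} directly with this $C_h$ to obtain \cref{lower-bound-conclusion} for $k=1,\ldots,\min(n,d)$. Since for the Steklov problem $d=\dim(\mathrm{Ker}(\gamma)^\perp)$ is infinite (the range of the solution operator is infinite-dimensional, as the trace space is), we have $\min(n,d)=n$, giving exactly the stated range of indices. There is no real obstacle here: the work has been front-loaded into \cref{main theorem} (the interpolation estimate \cref{trace theorem 2} plus the auxiliary bound \cref{error estimation 1}) and into the abstract \cref{framework_theorem}. The only conceptual point worth flagging in the write-up is that the abstract framework was extended in \S 2 precisely so that $N(\cdot,\cdot)$ need only be positive semi-definite, which is essential here because $N$ involves only boundary values and vanishes on $\mathrm{Ker}(\gamma)$; this is why \cref{framework_theorem}, rather than the framework of \cite{Liu-2015}, is the correct tool to invoke.
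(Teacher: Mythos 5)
Your proposal is correct and follows exactly the paper's own route: the theorem is stated as an immediate consequence of \cref{framework_theorem} applied with the explicit constant $C_h$ from \cref{main theorem}, which is all the paper does as well. Your added remarks on verifying (A1)--(A4) and on $d=\infty$ (so $\min(n,d)=n$) are consistent with the paper's setup in \S\ref{section-steklov-preliminary} and fill in details the paper leaves implicit.
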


\begin{remark}
Note that since $C_h=O(\sqrt{h})$ as $h\to 0$, the lower eigenvalue bound obtained in
\cref{lower-bound-conclusion} only converges at a rate of $O(h)$. This is not
optimal when compared with the approximate eigenvalues themselves, which have a convergence rate of
$O(h^2)$ for solutions with $H^{2}$-regularity; see, e.g., \cite{Yang2010}. An idea to recover the convergence rate is
to utilized the property of constant described in \cref{remark:trace-constant} and refine the mesh for boundary elements.
\end{remark}

\section{Computation Results}

Two example Steklov eigenvalue problems are considered here, one on the unit square $\Omega = (0,1)\times(0,1)$
and the other on the L-shaped domain $\Omega = (0, 2)\times(0,2)\backslash[1,2]\times[1,2]$.
For each example, explicit lower eigenvalue bounds are obtained by applying \cref{steklov_explicit_eig_bound}.

In order to estimate the floating-point rounding errors, interval arithmetic is utilized
for the numerical computation to guarantee that the results are
mathematically correct. The method of Behnke \cite{Behnke-1991} is used, along with the INTLAB toolbox, developed by Rump \cite{Rump-1999},
to give verified eigenvalue bounds for
the generalized matrix eigenvalue problems.

\subsection{Unit square domain}
We uniformly triangulate the domain $\Omega = (0,1)\times(0,1)$;
see \cref{unit square mesh} for a sample mesh with mesh size $h=1/8$.
Here, the mesh size is defined by the length of the triangle side adjacent to the
right angle. Note that the maximum edge length for each element is
$h_K=\sqrt{2}/8$.

\begin{figure}[!ht]
\begin{center}
\includegraphics[width=3.5cm,height=3.5cm,angle=90]{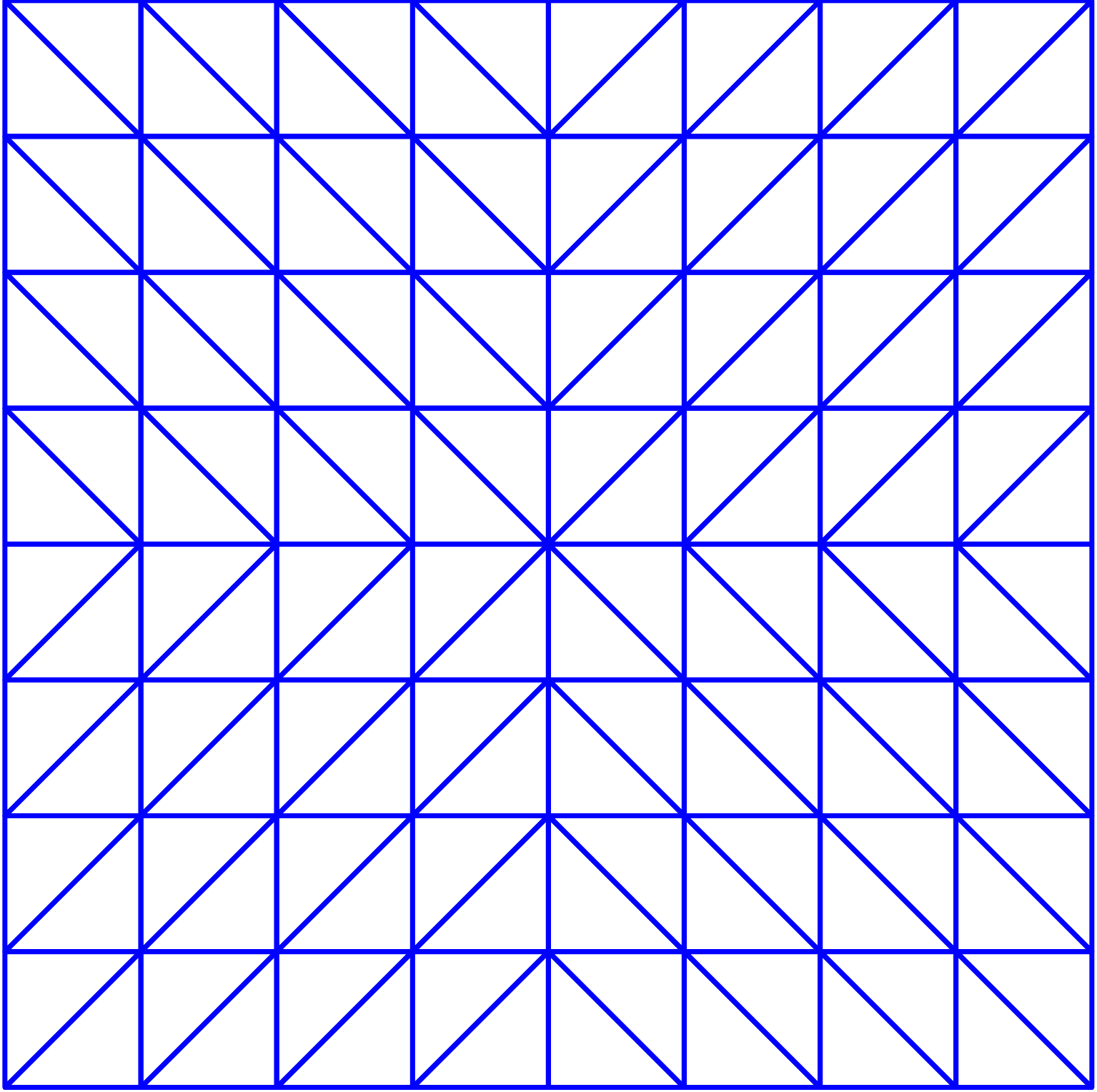}
\caption{Sample uniform triangulation for the unit square ($h=1/8$)}
\label{unit square mesh}
\end{center}
\end{figure}

%\paragraph{Rough eigenvalue bounds}
Over a quite refined mesh, the approximate eigenvalues $\widetilde{\lambda}_i$ ($i=1,2,\ldots,5$) with better precision are calculated; see \cref{table-ref-eig-unit-square}.
However, these results are not guaranteed to be strictly correct.
In \cref{unit-square-CR-CG}, we show verified eigenvalue bounds of the leading $5$ eigenvalues for different mesh sizes.
For example, $(0.231, 0.241)$ in the $1/8$ column and $\lambda_1$ row means that $0.231 < \lambda_1 < 0.241$ in the case $h=1/8$.
The lower bounds are obtained using
\cref{steklov_explicit_eig_bound} together with the Crouzeix--Raviart FEM,
while the upper bounds are obtained using the first-order Lagrange FEM.
%It can be seen that the upper bounds are more accurate than lower bounds.
%More specifically, with mesh size $h = 1/64$, the value of $C_h$ is 0.2327
%and further we can get the lower bound of the leading five eigenvalues at one time.

\begin{table}[tbhp]
\caption{Approximate eigenvalue over refined mesh ($h=1/256$).}
\label{table-ref-eig-unit-square}
\begin{center}
\begin{tabular}{|c|c|c|c|c|}\hline
\rule[-0.02cm]{0cm}{0.40cm}{}
 $\widetilde{\lambda}_1$ &  $\widetilde{\lambda}_2$ &  $\widetilde{\lambda}_3$ &  $\widetilde{\lambda}_4$ &  $\widetilde{\lambda}_5$ \\ \hline
 0.240079 & 1.492293 & 1.492293 &  2.082616  & 4.733516 \\ \hline
\end{tabular}
\end{center}
\end{table}

To investigate the convergence rate of the approximate eigenvalues, we consider
the total errors for the lower and upper eigenvalue bounds, denoted by
$\lambda_{i, \text{lower}}$ and $\lambda_{i, \text{upper}}$, respectively.
%~ High-precision approximate eigenvalues obtained using denser meshes and higher-order FEMs,
%~ denoted by $\widetilde{\lambda}_i$, are displayed in \cref{fig-unit-square-thm-lg}.
The total errors $Err_{\text{lower}}$ and $Err_{\text{upper}}$ are defined by
$$Err_{\text{lower}}:=\sum_{i=1}^{5} |\widetilde\lambda_i - \lambda_{i, \text{lower}}|,
\quad Err_{\text{upper}}:=\sum_{i=1}^{5} |\widetilde\lambda_i - \lambda_{i,\text{upper}}|\:.
$$
The convergence rates of the total errors $Err_{\text{lower}}$ and $Err_{\text{upper}}$
are denoted by $\sigma_{\text{upper}}$ and $\sigma_{\text{lower}}$, respectively,  in \cref{unit-square-CR-CG}.
It can be seen that the upper bound has much better convergence than the lower bound.
% is deteriorated to $O(h)$.

\begin{table}[tbhp]
\caption{Verified eigenvalue bounds for the unit square domain.
(Only 4 significant digits are displayed due to space limitations%
%The upper bounds for $\lambda_1$ and $\lambda_2$ have more correct significant digits,
%which are cut due to the space limitation.
)}
\label{unit-square-CR-CG}
\begin{center}
\begin{tabular}{c|c|c|c|c}\hline
$h$                     & 1/8            & 1/16           & 1/32           & 1/64         \\ \hline
$C_h$                   & 0.4039         & 0.2715         & 0.1849         & 0.1272       \\ \hline
$\lambda_{1}$         & (0.231, 0.241) & (0.235, 0.241) & (0.238, 0.241) & (0.239, 0.241) \\ \hline
$\lambda_{2}$         & (1.195, 1.503) & (1.342, 1.496) & (1.419, 1.494) & (1.456, 1.493) \\ \hline
$\lambda_{3}$         & (1.195, 1.503) & (1.342, 1.496) & (1.419, 1.494) & (1.456, 1.493) \\ \hline
$\lambda_{4}$         & (1.541, 2.148) & (1.800, 2.099) & (1.942, 2.087) & (2.014, 2.084) \\ \hline
$\lambda_{5}$         & (2.570, 4.897) & (3.456, 4.779) & (4.054, 4.746) & (4.391, 4.737) \\ \hline
$\sigma_{\text{lower}}$ &       -        &  0.83          &  0.95          &  1.00        \\ \hline
$\sigma_{\text{upper}}$ &       -        &  1.90          &  1.97          &  1.99        \\ \hline
\end{tabular}
\end{center}
\end{table}

\subsection{Domain with re-entrant corner (L-shaped)}

Here, we consider the eigenvalue bounds for a Steklov eigenvalue problem on the L-shaped domain $\Omega = (0, 2)\times(0,2)\backslash[1,2]\times[1,2]$.
Non-uniform meshes are used in the FEM computations.
\cref{mesh for lshape} shows a sample non-uniform mesh with a geometrically graded
triangular subdivision, where $h_K=O(r^{1/3})$ and $r$ is the distance from the element $K$ to the corner.

\begin{figure}[!ht]
\begin{center}
\includegraphics[width=4cm,height=4cm,angle=-90]{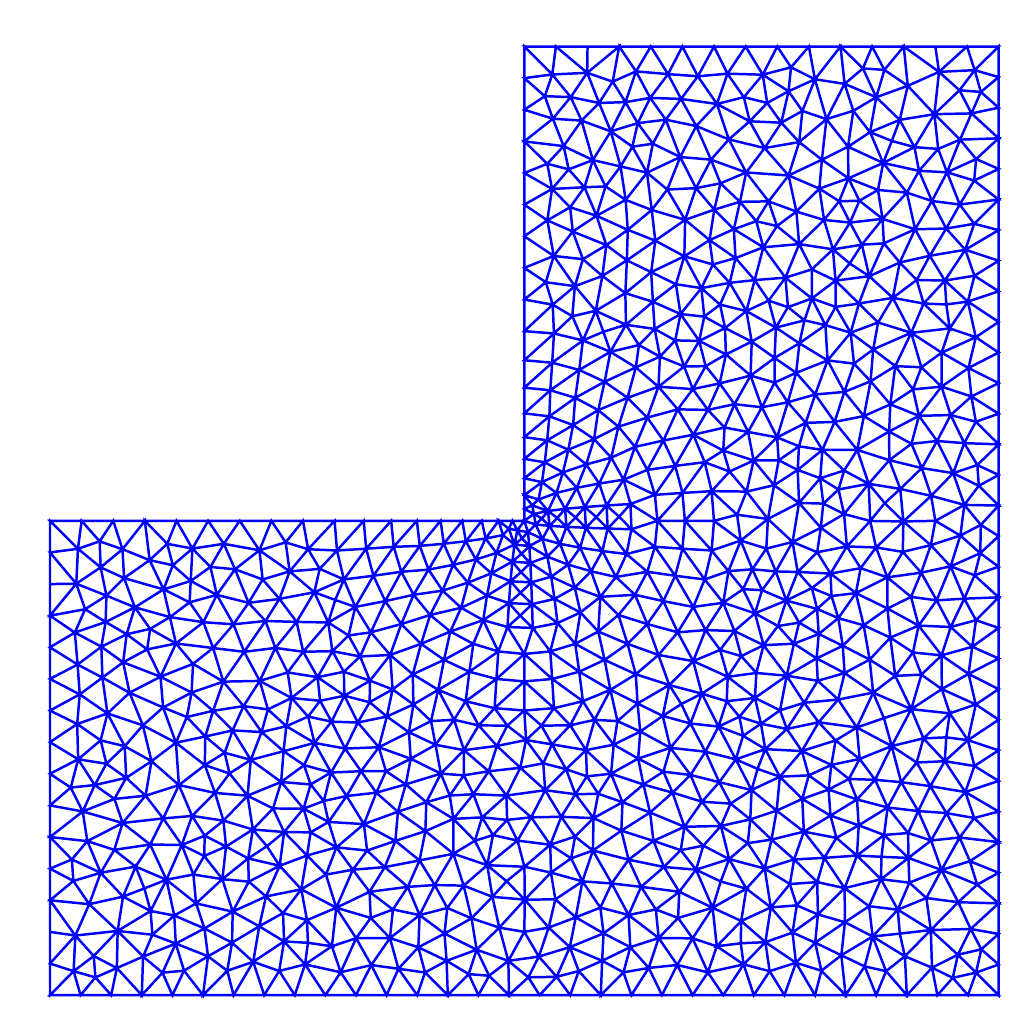}
\end{center}
\caption{Non-uniform mesh for the L-shaped domain}
\label{mesh for lshape}
\end{figure}

Mathematically rigorous lower and upper bounds for the leading 5 eigenvalues are listed in \cref{lshape thm 3.9 delaunay}.
The lower bounds are obtained using \cref{steklov_explicit_eig_bound} together with $4916$ elements and $C_h = 0.2224$. %4916 elements 0.2224
The upper bounds are acquired using a linear Lagrange finite element space.
\begin{table}[!ht]
\caption{Verified eigenvalue bounds for the L-shaped domain.}
\label{lshape thm 3.9 delaunay}
\begin{center}
\begin{tabular}{c|c|c|c|c}\hline
\rule[-0.02cm]{0cm}{0.50cm}{}
$i$  & Lower Bound   &  CR Element ($\lambda_{h,i}$)   &  $\widetilde{\lambda}_i$    & Upper Bound  \\ \hline
1    & 0.33575       &   0.34141     &     0.34141         & 0.34143      \\ \hline
2    & 0.59833       &   0.61673     &     0.61686         & 0.61717      \\ \hline
3    & 0.93844       &   0.98421     &     0.98427         & 0.98448      \\ \hline
4    & 1.56047       &   1.69159     &     1.69206         & 1.69332      \\ \hline
5    & 1.56791       &   1.70041     &     1.70092         & 1.70230      \\ \hline
\end{tabular}
\end{center}
\end{table}

\section{Summary}
In this paper, we propose an abstract framework that provides computable lower eigenvalue
bounds for variationally formulated eigenvalue problems. The framework is successfully applied to
the Steklov eigenvalue problem and explicit lower eigenvalue bounds are obtained
in conjunction with the Crouzeix--Raviart FEM.
Due to the error term related to the trace theorem in \cref{main theorem},
the guaranteed lower bound obtained in \cref{steklov_explicit_eig_bound} cannot achieve an optimal convergence rate even for convex domain, compared with
the asymptotic theoretical analysis.
As a future work, we will try to apply the Lehmann--Goerisch method to improve the convergence rate of rigorous eigenvalue bounds. \\

{\bf Acknowledgement} The authors show grateful thanks to M. Plum of Karlsruhe Institute of Technology for valuable comments at early stage of paper preparation.

\bibliography{library}

\end{document}